\theoremstyle{thmstyleone}%
\newtheorem{theorem}{Theorem}
\newtheorem{proposition}[theorem]{Proposition}%
\newtheorem{definition}{Definition}[section]
\newtheorem{remark}[definition]{Remark}
\title{HEATNETs: Explainable Random Feature Neural Networks  for High-Dimensional Parabolic PDEs}
\author[1]{Kyriakos Georgiou}
\author[2]{Gianluca Fabiani}
\author[3,\thanks{{Corresponding author: \texttt{constantinos.siettos@unina.it}}} 
]{Constantinos Siettos}
\author[4,\thanks{{Corresponding author: \texttt{ayannaco@aueb.gr}}} 
]{Athanasios N.\ Yannacopoulos}
\affil[1]{Department of Electrical Engineering and Information Technologies, \textit{University of Naples “Federico II”}, Naples, Italy }
\affil[2]{Modelling and Engineering Risk and Complexity, Scuola Superiore Meridionale, Italy; Currently: Hopkins Extreme Materials Institute and Dept. of Chemical and Biomolecular Engineering, Johns Hopkins University, Baltimore, USA }
\affil[3]{Dipartimento di Matematica e Applicazioni ``Renato Caccioppoli'', \textit{University of Naples “Federico II”}, Naples, Italy }
\affil[4]{Department of Statistics and Stochastic Modelling and Applications Laboratory, \textit{Athens University of Economics and Business}, Athens, Greece }
\begin{document}
\maketitle
\begin{abstract}
We deal with the solution of the forward problem for high-dimensional parabolic PDEs with random feature/projection neural networks (RFNNs). We first prove that there exists a single-hidden layer neural network with randomized heat-kernels arising from the fundamental solution (Green’s functions) of the heat operator, that we call HEATNET, that provides an unbiased universal approximator to the solution of parabolic PDEs in arbitrary (high) dimensions, with the rate of convergence being analogous to the $\mathcal{O}(N^{-1/2})$, where $N$ is the size of HEATNET.  
Thus, HEATNETs are explainable schemes, based on the analytical framework of parabolic PDEs, exploiting insights from physics-informed neural networks aided by numerical and functional analysis, and the structure of the corresponding solution operators.   
Importantly, we show how HEATNETs can be scaled up for the efficient numerical solution of arbitrary high-dimensional parabolic PDEs using suitable transformations and importance Monte Carlo sampling of the integral representation of the solution, in order to deal with the singularities of the heat kernel around the collocation points. We evaluate the performance of HEATNETs through benchmark linear parabolic problems up to 2,000 dimensions. We show that HEATNETs result in remarkable accuracy with the order of the approximation error ranging from $1.0$E$-05$ to $1.0$E$-07$ for problems up to 500 dimensions, and of the order of $1.0$E$-04$ to $1.0$E$-03$ for 1,000 to 2,000 dimensions, with a relatively low number (up to 15,000) of features.

\end{abstract}

{\bf Keywords}: 
\texttt High-dimensional parabolic PDEs; Explainable Machine Learning; Random Feature Neural Networks; Heat Kernels; Importance Sampling; Universal Approximation Theorem  \\


\section{Introduction}
The origins for the use of machine learning for the numerical solution of differential equations in the form of ordinary (ODEs) and partial differential equations (PDEs) can be traced back in the early '90s. Lee and Kang~\cite{lee1990neural} used a modified Hopfield Neural Network to solve a first-order nonlinear ODE. Meade and Fernadez \cite{meade1994numerical} used Feedforward Neural Networks (FNN) for the solution of linear ODEs, which training, i.e. the estimation of the weights is based on the Galerkin weighted-residuals method. Lagaris et al. \cite{lagaris1998artificial} used feedforward neural networks (FNN) to solve, relatively simple linear and nonlinear differential equations including initial and boundary value problems and steady-state solutions of PDEs. Gerstberger and Rentrop \cite{gerstberger1997feedforward}, used FNN to implement the implicit Euler scheme for the solution of stiff ODEs and Differential-Algebraic Equations (DAEs). Gonzalez-Garcia et al. \cite{gonzalez1998identification} addressed a multilayer neural network architecture implementing the 4-th order Runge-Kutta scheme for the time integration of nonlinear PDEs learned from data. The performance of the scheme was validated considering the Kuramoto–Sivashinsky equation.  \par
More recently, theoretical and computational advances have boosted research activity allowing further developments for the solution of the forward problem of differential equations with machine learning. Efforts have been mainly focused on the use of Physics-Informed Neural Networks (PINNs) \cite{raissi2019physics,karniadakis2021physics}: deep neural networks \cite{sirignano2018dgm,han2018solving,raissi2019physics,zhang2020learning,lu2021deepxde,tang2021exploratory,de2022error,ma2023bi,hu2024tackling,lee2025fast,kavousanakis2025going} which directly parameterize the solution function and train via PDE residuals, Gaussian processes \cite{raissi2017inferring,raissi2018numerical,chen2021solving} which offer a probabilistic solution with uncertainty estimates, random feature/projection shallow neural networks (RFNN) \cite{DWIVEDI202096,calabro2021extreme,fabiani2021numerical,dong2021local,dong2021modified,galaris2022numerical,fabiani2023parsimonious,bolager2023sampling,wang2024extreme,datar2024solving,fabiani2025random}, which reduce the training process into the solution of a regularized (linear) least-squares problem through the theory of random embeddings. Neural operators (NOs) \cite{li2020fourier,lu2021learning,wang2021learning,fang2024learning,fabiani2025randonets,fabiani2025enabling}, which learn the mapping from input functions (e.g. initial or boundary data, coefficients) to the solution function, and more recently Fredholm neural networks \cite{georgiou2025fredholm1}, which emulate fixed point algorithms as DNNs with predetermined by the iterative method, weights and biases, to solve elliptic PDEs have been also used. 

\subsection{Literature review and state-of-the-art for high-dimensional PDEs}
Most machine-learning methods for PDEs focus on low-dimensional problems. They are rarely compared directly with classical methods such as finite elements, finite differences, or finite volumes, which are usually more accurate and efficient. While, some (few) works have shown that well-designed shallow networks, like random-feature neural networks, can compete with traditional methods for ODEs and PDEs \cite{fabiani2021numerical,fabiani2023parsimonious,grossmann2024can,fabiani2025random}, the main challenge remains solving high-dimensional spatial PDEs, where standard numerical methods are no longer practical. Towards this aim, Raissi et al. \cite{raissi2017inferring} used Gaussian Processes for the numerical solution of both stationary and time-dependent linear integro-differential operators from multi-fidelity data including the Possion equation in 10 dimensions.  
Han et al. \cite{han2018solving} used DNNs to solve high-dimensional nonlinear parabolic PDEs including the Black–Scholes, the Hamilton–Jacobi–Bellman and the Allen–Cahn equation in 100 dimensions. Wei et al. \cite{wei2018machine} used single-layer FNNs to solve modified high-dimensional diffusion equations up to 100 dimensions. 
Sirignano and Spiliopoulos \cite{sirignano2018dgm}, introduced the Deep Galerkin Method (DGM)
for the solution of high-dimensional free boundary PDEs up to 200 dimensions including the Hamilton–Jacobi–Bellman PDE. The training of the DNNs was distributed across several GPU nodes. A theorem regarding the approximation power of DNNs for quasilinear PDEs is also given. 
Chan et al. \cite{chan2019machine} used DNNs to solve time-dependent semi-linear PDEs up to 100 dimensions.  
Hu et al. \cite{hu2024tackling}, scaled up PINNs using a stochastic dimension gradient descent method for their training to solve arbitrary high-dimensional PDEs,  including the Hamilton–Jacobi-Bellman and the Schrödinger equations in tens of thousands of dimensions. 
Finally, in \cite{wang2024extreme}, Qang and Dong, proposed a method based on randomized neural networks (ELMs)-whose training is achieved via least-squares and an approximate variant of the theory of functional connections to solve high-dimensional PDEs including the heat, Korteweg-de Vries (KdV), and the advection diffusion equation up to 15 dimensions.\par

Each approach has its own pros and cons: for example, PINNs  and DNNs \cite{wang2021understanding} although highly expressive and scalable, can be hard to train (even if training is distributed across GPUs), they lack interpretability and, in general, a rigorous theoretical framework for uncertainty quantification. Gaussian Process models offer a rigorous theoretical Bayesian framework with built-in uncertainty quantification, but scale poorly with large datasets or high dimensions and also require GPUs for training. RFNNs offer the advantage of reducing training to a (linear) least-squares problem with strong theoretical guarantees, the arsenal of parallelizable and matrix-free linear algebra algorithms in the Krylov subspace, and connections to kernel methods with possible uncertainty quantification. However, they suffer from the curse of dimensionality, since training essentially reduces to tuning shape parameters and sampling effectively in a high-dimensional space. Moreover, they are memory-intensive, as their training relies on solving large-scale least-squares problems. Therefore, they have not yet been implemented for the solution of PDEs in very-high dimensions (e.g., in \cite{wang2024extreme}, RFNNs have been used to solve PDEs up to 15 dimensions). 

\subsection{Novelty and contribution of the proposed method}
Here, in order to mitigate the limitations associated with RFNNs,  we propose HEATNETs: ``functional analysis-informed'' RFNNs with randomized heat-kernels arising from the fundamental solution (Green’s functions) of the heat operator, for the numerical solution of time-dependent parabolic PDEs in arbitrary high dimensions. The proposed scheme is informed by the analytical theory of parabolic PDEs, rooted in numerical and functional analysis and the properties of their solution operators. It builds on our previous works on random feature/projection neural networks for time-dependent differential equations \cite{fabiani2021numerical,galaris2022numerical,patsatzis2024slow,fabiani2025randonets,fabiani2025random2} and
recent theoretical insights on learning Green's functions associated with parabolic
PDEs via randomized SVD which allows to learn Hilbert Schmidt operators, to construct a low-rank approximant using random functions drawn from a Gaussian process \cite{boulle2022learning}. Specifically, we approximate the solution to parabolic PDEs in arbitrary high-dimensions by computing the probabilistic expectation of the transition density of Brownian motion-corresponding to the Green's functions of the diffusion operator-biased by the accumulated effect of the source term, resulting in an integral (mild) representation of the solution. This representation is the basis of the HEATNET construction: we consider a RFNN which simulates the Monte Carlo approximation of the mild solution, by using as features either the Gaussian basis functions or terms from an importance sampling scheme, in order to deal with the curse of dimensionality. 

We prove that the HEATNETs are unbiased universal approximators of the solution of parabolic PDEs in arbitrary high dimensions, and we demonstrate that their convergence rate scales as $\mathcal{O}(N^{-1/2})$, where $N$ is the number of neurons in the hidden layer.  
\par Our theoretical results provide a direct link between the aforementioned representation of the solution to the PDE and the custom RFNN. We then show, how we can train the HEATNET model using the physics informed machine learning framework \cite{raissi2019physics,karniadakis2021physics}, that results in a regularized least-squares problem.  We show that, by combining the mathematically-grounded custom model with the standard linear algebra/ numerical analysis arsenal, we can obtain the trained model which requires significantly less samples than the standard naive Monte Carlo approximation, and simultaneously is a universal approximator. 
Most importantly, by taking advantage of importance sampling, we show that the proposed scheme provides an appealing and explainable approach for the solution of parabolic PDEs in high dimensions. For our illustrations, we consider examples with separable and non-separable solutions, up to 2,000 dimensions, and give a detailed analysis of the HEATNETs performance in such problems with examples that showcase the models' accuracy, generalization ability, and the effect of the number of features.  
 
\section{Problem Statement and preliminaries}
We will propose a custom random neural network with random features, designed to solve the time-dependent parabolic PDEs defined by:
\begin{equation}
\begin{split}
    u_t(t,x)&=\mathcal{F}(u,Du, D^2 u), \quad (t,x) \in [t_0,t_f] \times {\cal D}\\
    \mathcal{B}(u)&=g(t,x), \quad (t,x) \in [t_0,t_f] \times\partial {\cal D}\\
    u(t_0,x)&=f(x), \quad x \in {\cal D},
\end{split}
\label{eq:problem}
\end{equation}
where  ${\cal D} \subset {\mathbb R}^{d}$ is a suitable spatial domain (the case  ${\cal D}={\mathbb R}^{d}$ is also allowed), $\mathcal{F}$ is a nonlinear elliptic differential operator, $\mathcal{B}u=g$ are the boundary conditions and $f$ is the initial condition. In the case where ${\cal D}={\mathbb R}^{d}$, the solution of the problem requires appropriate choice of the boundary conditions in terms of suitable decay conditions of the solution at infinity. \\
In what follows, we briefly revise key concepts on the representation of solutions of problems in the general form \eqref{eq:problem}, which will form the basis for our proposed randomized shallow neural network architecture. 

\subsection{Preliminaries: Potential representation and mild solutions}

We will focus our attention on operators ${\cal F}$ in the following form:
\begin{equation}
\begin{aligned}
{\cal F}(u, D u, D^2 u)= div( A(t,x,u) Du ) + b(t,x,u) \cdot D u + c(t,x,u) u +F(t,x, u), 
\end{aligned}
\end{equation}
where $A \in {\mathbb R}^{d \times d}$ is a matrix valued function, $b \in {\mathbb R}^{d}$
is a vector valued function and $c$ and $f$ are scalar valued functions. In the case where $A,b, c, f$ are independent of $u$, the problem reduces to a linear parabolic equation. It is also feasible to let $A, b, c, f$ depend also on $Du$ in which case the equation is a quasilinear parabolic equation. An important assumption is that the matrix $A$ satisfies the ellipticity property (\cite{boulle2022learning}):
\begin{equation}
    \begin{aligned}
 c | \xi |^2 \le A \xi \cdot \xi  \le C | \xi|^2, \,\,\, \forall \,\, \xi \in {\mathbb R}^{d},
    \end{aligned}
    \end{equation}
for some $c, C \ge 0$. In the case where $A$ depends on $t, x$ or even $u$, this condition is assumed to hold for almost all $t, x$ and possible real values of $u$.

In what follows, we first consider the linear problem
\begin{equation}\label{11111}
\begin{aligned}
&u_t=L u + F, \, \,
&L u = \sum_{i,j=1}^{n} D_{i}(a_{ij} D_{j}u) + \sum_{i=1}^{n} b_i D_i u + c u,
\end{aligned}
\end{equation}
where $A=(a_{ij}), b=(b_i), c$ are possibly functions of $t,x$.
We will also define the operator ${\cal L}$ by
\begin{equation}
    {\cal L} f:=f_t-Lf,
\end{equation}
for any function $f$, so that problem \eqref{11111} reduces to the abstract operator equation
\begin{equation}
{\cal L}u=F,
\end{equation}
as well as the compact notation $X=(t,x)$ and $Z=(s,z)$.

A Green's function for the operator ${\cal L}$ is a solution of the problem
\begin{equation}\label{22222}
\begin{aligned}
{\cal L}_{Z}^{*}G(X,Z)=\delta(X-Z), \, \,
\lim_{s \to t^{-}}G(X,Z)=\delta(x-z) ,
\end{aligned}
\end{equation}
\color{black}
where ${\cal L}_{Z}^{*}$ is the adjoint operator of ${\cal L}$, with the subscript $Z$ denoting the fact that this operator is assumed to act on functions of $Z=(s,z)$. We will also use the alternative notation 
\begin{equation}
G(X,Z)=G(t,x ; s, z),    
\end{equation}
for the Green's function when needed.
Both the system \eqref{11111} as well as the definition of the Green's function \eqref{22222} can be complemented with boundary conditions.

An important use of the Green's function is in the integral representation of the solution of \eqref{11111} in terms of (see also in \cite{boulle2022learning}):
\begin{equation}\label{33333}
\begin{aligned}
u(t,x)=\int_{{\cal D}} G(t, x ; 0, z)u(0, z)dz +\int_{0}^{T} \int_{ D} G(t,x ; s, z) F(s, z) dz ds =: ({\cal G} \star F)(t,x).
\end{aligned}
\end{equation}
Representation \eqref{33333} will form the basis of our approach.

The existence of Green's functions for parabolic equations as well as their properties is a well and extensively studied problem, up to date.  Interesting technical issues arise from the fact that the Green's function is a measure valued solution of the parabolic PDE. Various techniques have been proposed concerning the construction of Green's functions analytically and numerically. 

Moreover, for certain classes of important operators the Green's function is known in closed form.
A particularly important special case is the case where $L=D \Delta$, with $D>0$ and $\Delta$ the Laplacian, defined on the whole space ${\mathbb R}^{d}$, in which case:
\begin{equation}\label{30-10-2025}
G(t,x;s,z):= (2 \pi D)^{-d/2}  (t-s)^{-d/2} \exp\left( -\frac{\| x- z\|^2}{4 D (t-s)} \right) {\mathbf 1}_{[0, t]}(s),
\end{equation}
where ${\mathbf 1}_{[0, t]}$ is the indicator function of the interval $[0, t]$ (also called the Heaviside function).

Other cases are known in closed form as well. For example the case of the operator:
\begin{equation}\label{ito-gen}
L=Tr(A D^2 u) + b \cdot Du,
\end{equation}
in which case the Green's function for the corresponding operator is the probability density for the It\=o process:
\begin{eqnarray}\label{ito-proc}
dX_t = b dt + S dW_{t},
\end{eqnarray}
where $S$ is such that $A=S S^{T}$ and $W_t=(W_{1,t}, \cdots, W_{d,t})$ is the standard ${\mathbb R}^{d}$ Wiener process.

\subsection{Mild solutions}
Using the Green function, we may also express the solution of non-linear parabolic equations in terms of an equivalent integral equation, resulting in what is called the mild solution of the non-linear PDE. For the sake of concreteness, consider the nonlinear parabolic PDE:
\begin{equation}\label{44444}
\begin{aligned}
u_t &= L u(t,x) + F(t,x,u(t,x)), \, \,
u(0,x)&=u_{0}(x),
\end{aligned}
\end{equation}
where  $F : [0, T] \times {\cal D} \times {\mathbb R} \to {\mathbb R}$, is a known non-linear function, assumed for simplicity to be globally Lipschitz.  The problem is also subject to boundary conditions on $\partial {\cal D}$, or we may assume the problem on the whole space ${\mathbb R}^{d}$, subject to suitable decay conditions at infinity.

Using the Green's function $G$ for the operator ${\cal L}$, as well as \eqref{33333}, we can express \eqref{44444} in integral form in terms of (see \cite{boulle2022learning}):
\begin{equation}\label{55555}
\begin{aligned}
u(t,x)=\int_{{\cal D}} G(t, x ; 0, z) u_{0}(z) dz + \int_{0}^{T} \int_{{\cal D}} G(t,x ; s, z) F(s, z, u(s,z) ) dz ds =: ({\cal G} \star F(\cdot, u(\cdot) ) )(t,x).
\end{aligned}
\end{equation}

\begin{definition}[Mild solution]
A function $u \in C([0,T], X )$, where $X$ is a suitable Banach space, that satisfies the integral equation \eqref{55555} is called a mild solution of \eqref{44444}.
\end{definition}

A suitable choice for $X$ is a Sobolev space for example $X=W^{1,2}({\cal D})$.
Using fixed point arguments, the solvability of \eqref{55555} can be demonstrated (under suitable conditions), thus leading to the existence of mild solutions to \eqref{44444}. It should be noted that the integral representation \eqref{55555} allows for weaker solutions than the classical formulation of \eqref{44444}, existing under weaker assumptions on the data of the problem, e.g., $F \circ u$ being a $L^{1}$ function. In particular, \eqref{55555} allows for $u$ to be a continuous function of time, taking values in an appropriate function space carrying the spatial dependence of the solution. This is to be compared with the classical formulation in \eqref{44444} which requires $u$ to be continuously differentiable in $(t,x)$ for a sufficient number of derivatives such that \eqref{44444} makes sense everywhere in $[0, T] \times {\cal D}$. Clearly, subject to additional regularity assumptions on the data of the problem, the mild solution will be regular enough to qualify as a classical solution. In the general case, the notion of mild solutions allows for the existence of solutions for equations that do not admit classical solutions, and thus generalizes and extends the notion of a solution for both linear and non-linear PDEs.

\section{HEATNETs: Shallow random feature/ projection neural networks with heat-kernels}


Consider the function space $S_{F}$ of solutions of the (non-linear) parabolic PDE \eqref{44444} 
\begin{equation}
S_{F}:=\{ u \,\, \mbox{satisfying} \,\, \eqref{44444} \,\, \mbox{with RHS} \,\, F\},
\end{equation}
for a particular choice of RHS $F$, and the solution space,
\begin{equation}
S=\bigcup_{F \in Z} S_{F}=\{ u \in X_{T} \,\, \mbox{satisfies} \,\, \eqref{44444} \,\, \mbox{for some} \,\, F \in Z\},
\end{equation}
where $X, Z$ are appropriate function spaces (a typical choice would be $Z=L^{p}(\Omega)$ for some $p \ge 1$) and $X_{T}$ some Sobolev space of functions on $[0, T] \times {\cal D}$  or continuous functions with values on $X$, a Sobolev space). 

To simplify the arguments used here, we make  (without loss of generality) the assumption that the functions in ${\cal S}$ admit continuous representations.

We will also define the set of random features neural networks
\begin{eqnarray*}
{\cal R}(w, \phi)= \{ u : [0, T] \times {\mathbb R}^{d} \to {\mathbb R}, \,\, : \,\, u(t,x)=\sum_{i=1}^{N} w_{i} \phi_{i}(t, x), \,\, N \in {\mathbb N} \}
\end{eqnarray*}
where $\phi=( \phi_1, \cdots, \phi_N)$ is a set of random feature functions, $\phi : [0, T] \times {\cal D} \to {\mathbb R}$
and $w=(w_1, \cdots, w_{N}) \in {\mathbb R}^{N}$ is a set of appropriate weights.  We are deliberately vague concerning the choice of random feature functions, but note that an important special case is the case where $\phi_{i}(\cdot, \cdot)=\Phi( \cdot, \cdot ; \theta_{i})$ where $\Phi : [0, T] \times {\cal D} \times \Theta \to {\mathbb R}$, is a central  parametric family of (deterministic) functions, depending on a parameter $\theta \in \Theta$, where $\Theta$ is an appropriate parameter set, and $\theta_{i} \sim_{iid} \Psi$, where $\Psi$ is an appropriate probability distribution.

We can formulate the following result.

\begin{proposition}\label{prop-representation-1}
There exists a shallow random feature neural network (RFNN), ${\cal R}(w, \phi)$, that can approximate arbitrarily close any function $u \in S$.
\end{proposition}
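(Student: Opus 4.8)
The plan is to build the approximating network directly from the mild-solution representation \eqref{55555}. Fix any $u \in S$, so that $u \in S_{F}$ for some admissible right-hand side $F$. Evaluating the nonlinearity along the (now fixed) solution, set $\tilde{F}(s,z) := F(s,z,u(s,z))$; then \eqref{55555} collapses to a representation that is \emph{linear} in its two integrands,
\begin{equation*}
u(t,x)=\int_{{\cal D}} G(t,x;0,z)\,u_{0}(z)\,dz+\int_{0}^{T}\!\!\int_{{\cal D}} G(t,x;s,z)\,\tilde{F}(s,z)\,dz\,ds.
\end{equation*}
The problem therefore reduces to approximating integrals of the form $\int G(t,x;s,z)\,h(s,z)\,dz\,ds$ by finite linear combinations of heat-kernel features $G(t,x;\cdot,\cdot)$, which is precisely the form of an element of ${\cal R}(w,\phi)$ once we identify $\phi_{i}(t,x)=G(t,x;s_{i},z_{i})$ for sampled nodes $(s_{i},z_{i})$.

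The second step is to read each integral as an expectation and discretize it by Monte Carlo, exploiting that the heat kernel \eqref{30-10-2025} is (up to the $(t-s)^{-d/2}$ normalization) a Gaussian transition density, cf.\ the It\=o process \eqref{ito-proc}. Choosing a proposal distribution $\Psi$ on $[0,T]\times{\cal D}$ (and, for the first term, on $\{0\}\times{\cal D}$) with density also denoted $\Psi$, I would write
\begin{equation*}
\int G(t,x;s,z)\,h(s,z)\,dz\,ds=\mathbb{E}_{(s,z)\sim\Psi}\!\left[\,w(s,z)\,G(t,x;s,z)\,\right],\qquad w(s,z)=\frac{h(s,z)}{\Psi(s,z)},
\end{equation*}
draw i.i.d.\ parameters $\theta_{i}=(s_{i},z_{i})\sim_{iid}\Psi$, and form the estimator
\begin{equation*}
u_{N}(t,x)=\frac{1}{N}\sum_{i=1}^{N} w(\theta_{i})\,G(t,x;\theta_{i}).
\end{equation*}
By construction $u_{N}\in{\cal R}(w,\phi)$ with $\phi_{i}=G(\cdot,\cdot;\theta_{i})$, matching the central parametric-family structure described before the statement, and the estimator is \emph{unbiased}, $\mathbb{E}[u_{N}(t,x)]=u(t,x)$, which is the source of the unbiasedness claimed in the proposition.

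For convergence, I would invoke the strong law of large numbers to obtain $u_{N}(t,x)\to u(t,x)$ almost surely at each $(t,x)$, and control the rate through the variance: the mean-square error equals $\mathrm{Var}_{\Psi}[w\,G]/N$, so that $\|u_{N}-u\|_{L^{2}}=\mathcal{O}(N^{-1/2})$ provided the importance-weighted integrand is square-integrable under $\Psi$. Hence for every $\varepsilon>0$ there is, with probability approaching one, a realization of the nodes yielding an RFNN within $\varepsilon$ of $u$, which establishes the existence asserted in the proposition together with the advertised $\mathcal{O}(N^{-1/2})$ rate.

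The main obstacle is precisely the finiteness of this variance: the factor $(t-s)^{-d/2}$ in \eqref{30-10-2025} makes $G$ concentrate into a Dirac mass as $s\to t^{-}$, so a naive (e.g.\ uniform) proposal makes $\mathrm{Var}_{\Psi}[w\,G]$ blow up, severely so in high dimension $d$, and can destroy the $\mathcal{O}(N^{-1/2})$ rate. The remedy, and the crux of the argument, is to choose $\Psi$ by importance sampling so that it mirrors the singular, Gaussian-type concentration of $G$ about the collocation point $x$ and near $s=t$, keeping $w\,G$ square-integrable; this is exactly the transformation and importance Monte Carlo machinery the paper develops subsequently. A secondary point is to make ``arbitrarily close'' precise: the convergence above is probabilistic and pointwise, so upgrading it to uniformity over a set of evaluation points would require a uniform law of large numbers (a bracketing or Glivenko--Cantelli argument over the feature class) or restriction to a compact evaluation set.
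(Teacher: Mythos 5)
Your proposal follows essentially the same route as the paper's own proof: rewrite the mild solution \eqref{55555} as an expectation over the distribution of the sampled nodes, discretize by Monte Carlo, and identify the summands as heat-kernel features $\phi_i = G(\cdot,\cdot;s_i,z_i)$ with weights $w_i = \tfrac{1}{N}F(s_i,z_i,u(s_i,z_i))/\psi(s_i,z_i)$ that are independent of $(t,x)$. The only differences are that you fold in the general proposal $\Psi$ (which the paper defers to Proposition \ref{prop-sampling}) and you explicitly flag the variance-finiteness and pointwise-versus-uniform convergence caveats, which the paper's argument leaves implicit.
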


\begin{proof} For simplicity, without loss of generality, set the initial condition $u_0=0$.
Consider any function $u \in S$. By definition this is the solution of a (non-linear) parabolic PDE such as \eqref{44444}. We now switch to its mild formulation \eqref{55555}, and express it as
\begin{equation}\label{55555-1}
u(t,x)=\int_{0}^{T} \int_{{\cal D}}G(t, x ; s, z)  F(s, z , u(s, z)) dz ds.
\end{equation}
Let $Z = ( s, z) \sim {\cal U}([0, T] \times {\cal D})$ be a random variable uniformly distributed.

We note that one can express  \eqref{55555-1} in terms of the probabilistic  representation as
\begin{equation}\label{55555-2}
u(t , x) = {\mathbb E}_{Z}[G(t, x ; s, z)  F(s, z , u(s, z))].
\end{equation}
 Based on the above, we may choose a sample of points $\{\theta_i:=(s_i, z_i), \,\, i=1, \cdots, N\} \sim {\cal U}([0, T] \times {\cal D})$ and express the approximate the expectation in \eqref{55555-2} in terms of the estimator
\begin{equation}\label{55555-2}
\begin{aligned}
u(t , x) &= {\mathbb E}_{Z}[G(t, x ; s, z) F(s, z , u(s, z))]
\simeq  u_{N} :=\frac{1}{N} \sum_{i=1}^{N} G(t, x ; s_i, z_i)  F(s_i, z_i , u(s_i, z_i)).
\end{aligned}
\end{equation}
This approximation is essentially the Monte-Carlo approximation to the expectation in \eqref{55555-1}  and by the law of large numbers it is known that 
the estimator $u_{N}$ converges almost surely to $E:={\mathbb E}_{Z}[G(t, x ; s, z)  F(s, z , u(s, z))]$, with a rate of convergence as $O(N^{-1/2})$. We emphasize that the estimator $u_{N}$ is still in implicit form (as it depends on the function $u$, but this is not an issue for the arguments here.

Assuming the existence of a mild solution $u$, which allows for continuous pointwise estimation,  the function $F(\cdot, \cdot, u(\cdot, \cdot))$ is well defined hence upon defining the functions
\begin{eqnarray}\label{77777-0}
(t, x) \mapsto \phi(t, x ; \theta) := G(t , x ; s, z), \,\,\,\, \theta=(s, z),
\end{eqnarray}
as well as the weights
\begin{eqnarray}\label{pagrati-100}
w_{i}:=\frac{1}{N}  F(s_i, z_i, u(s_i, z_i)),
\end{eqnarray}
combining \eqref{55555-1}, \eqref{55555-2},  \eqref{77777-0} and \eqref{pagrati-100}, we end up with an approximation of the mild solution as
\begin{equation}\label{77777}
u_{N}(t, x) = w_i \phi(t, x ; \theta_i), \,\,\, \theta_i \sim {\cal U}([0, T] \times \mathcal{D}).
\end{equation}
We emphasize that the weights exist (are well defined) by the above arguments and importantly are independent of $(t,x)$.
This completes the proof.
\end{proof}

\begin{remark}\label{remark-assoe-1}
The representation \eqref{77777} can be interpreted as a shallow RFNN that can be used for the representation of the mild solution. The following remarks are in order.
\begin{itemize}

\item[(1)] The choice of randomization scheme presented in Proposition \ref{prop-representation-1}, i.e., in terms of $\theta_{i} \sim_{i.i.d} {\cal U}([0, T] \times {\cal D})$ is by no means exclusive. Alternative randomization schemes, akin to importance sampling schemes, which may enhance the performance of the approximation can be proposed (see Section \ref{sec-randomization}).

\item[(2)] Representation \eqref{77777} guarantees the existence of coefficients $w=(w_1, \cdots, w_N)$ such that the mild solution can be expressed as the linear combination of the random features $\phi(\cdot, \cdot ; \theta_i)$. Clearly, as seen in the proof, the coefficients $w$ depend on the solution $u$ we are aiming to find whereas the random features do not. This is of course natural and consistent with the spirit of other universal approximation theorems. Moreover, the coefficients $w$ are random variables, depending on the choice of the randomized parameters $\{\theta_i, \,\, i=1, \cdots, N\}$. 
\color{black}
\item[(3)] For general nonlinear problems or in even in the case where the exact Green's function is not known (see Section \ref{section-basis} below) the important question of determining the weights $w=(w_1, \cdots, w_N)$ in the expansion arises. This can be addressed using a PINN methodology, assuming an approximation of the solution as $u_{N}(\cdot, \cdot)=\sum_{i=1}^{N} w_{i} \phi_{i}(\cdot, \cdot)$, which satisfies the equation \eqref{44444} or its mild formulation \eqref{55555}.

\end{itemize}
\end{remark}

\subsection{Choosing the randomization scheme}\label{sec-randomization}

The basic idea behind the choice of the randomization scheme, is that we can choose a distribution $\Psi$, from which we can sample the values of the parameters of the randomized neural network $\mathcal{R}(w,\phi)$, so that the singularities of the Green's function can be smoothed out. At this point, we demonstrate the following proposition.

\begin{proposition}[Alternative sampling schemes]\label{prop-sampling}
    An alternative choice for the randomized neural network ${\cal R}(w, \phi)$ approximating arbitrarily close any function in $S$ can be in terms of the scheme
    \begin{equation}
\begin{aligned}
u(t,x)=\sum_{i=1}^{N} w_i \bar{\phi}(t,x ; \theta_i), \,\,\,\, \theta_i \sim \Psi,
\end{aligned}
\end{equation}
\color{black}
where
\begin{equation}
\begin{aligned}
(t, x) \mapsto \bar{\phi}(t, x ; \theta_i) = \frac{1}{\psi(s_i, z_i)} G(t, x ; s_i, z_i),
\end{aligned}
\end{equation}
where $\psi$ is the probability density function of the probability distribution $\Psi$.
\end{proposition}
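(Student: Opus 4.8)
The plan is to follow the same Monte-Carlo reasoning as in Proposition~\ref{prop-representation-1}, but to reinterpret the integral representation of the mild solution as an expectation with respect to the sampling distribution $\Psi$ rather than the uniform law. Concretely, starting again (for simplicity with $u_0=0$) from the mild formulation $u(t,x)=\int_0^T\int_{\cal D} G(t,x;s,z)F(s,z,u(s,z))\,dz\,ds$, I would insert the density $\psi$ of $\Psi$ by multiplying and dividing the integrand, so that
\begin{equation}
u(t,x)=\int_0^T\!\!\int_{\cal D}\frac{G(t,x;s,z)F(s,z,u(s,z))}{\psi(s,z)}\,\psi(s,z)\,dz\,ds={\mathbb E}_{Z\sim\Psi}\!\left[\frac{G(t,x;s,z)}{\psi(s,z)}F(s,z,u(s,z))\right].
\end{equation}
This is the standard importance-sampling identity, and it is exactly the reweighting that defines the modified feature $\bar\phi(t,x;\theta)=\psi(s,z)^{-1}G(t,x;s,z)$ appearing in the statement.

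Next I would discretize this expectation by the law of large numbers: drawing $\theta_i=(s_i,z_i)\sim_{\mathrm{iid}}\Psi$ and forming the empirical average
\begin{equation}
u_N(t,x)=\frac1N\sum_{i=1}^N\frac{G(t,x;s_i,z_i)}{\psi(s_i,z_i)}F(s_i,z_i,u(s_i,z_i))=\sum_{i=1}^N w_i\,\bar\phi(t,x;\theta_i),
\end{equation}
with weights $w_i:=N^{-1}F(s_i,z_i,u(s_i,z_i))$ that, exactly as in Proposition~\ref{prop-representation-1}, are independent of $(t,x)$ and well defined once a continuous mild solution exists. Since the reweighted integrand has expectation $u(t,x)$ under $\Psi$, the almost-sure convergence $u_N\to u$ and the $\mathcal{O}(N^{-1/2})$ rate then follow verbatim from the arguments already used in Proposition~\ref{prop-representation-1}.

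The main obstacle --- and the whole point of the construction --- is to justify the reweighting rigorously and to control the resulting variance. I would require $\Psi$ to be a genuine probability distribution on $[0,T]\times{\cal D}$ whose density $\psi$ is strictly positive on the support of $(s,z)\mapsto G(t,x;s,z)F(s,z,u(s,z))$, so that the division is well defined and the change of measure is valid; under this support condition the first identity holds for every fixed $(t,x)$. The $\mathcal{O}(N^{-1/2})$ rate additionally needs the second moment $\int_0^T\!\int_{\cal D}\psi^{-1}G^2F^2\,dz\,ds$ to be finite, and here the freedom in choosing $\psi$ becomes essential: because the heat kernel $G(t,x;s,z)$ is singular as $(s,z)\to(t,x)$, a uniform $\psi$ can produce a heavy-tailed estimator, whereas selecting $\psi$ to mimic the singular profile of $G$ around the collocation points tames $\psi^{-1}G$ and yields a finite, small variance. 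Verifying that such a $\psi$ both integrates to one and keeps this second moment finite for the heat kernel is the delicate step; the remaining manipulations are identical to those in Proposition~\ref{prop-representation-1}.
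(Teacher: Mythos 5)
Your proposal is correct and follows essentially the same route as the paper's own proof: the importance-sampling identity obtained by multiplying and dividing by $\psi$, the Monte-Carlo/law-of-large-numbers discretization, and the identification of the weights $w_i = N^{-1}F(s_i,z_i,u(s_i,z_i))$ and features $\bar\phi = \psi^{-1}G$ are exactly the steps in the paper. Your additional remarks on the positivity of $\psi$ on the relevant support and the finite-second-moment condition needed for the $\mathcal{O}(N^{-1/2})$ rate make explicit hypotheses that the paper leaves implicit, but they do not change the argument.
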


\begin{proof} For simplicity, without loss of generality, set the initial condition $u_0=0$.
Starting from the representation of the mild solution as
\begin{equation}\label{999999}
\begin{aligned}
u(t,x)&=\int_{0}^{T} \int_{{\cal D}} G(t, x ; s, z) F(s, z, u(s, z)) dz ds \\
&=\int_{0}^{T}\int_{{\cal D}} 
G(t, x ; s, z) \frac{1}{\psi(s, z)} F(s, z, u(s, z))  \psi(s, z) dz ds \\
&={\mathbb E}_{\theta =(s, z) \sim \Psi}\left[ G(t, x ; s, z)  \frac{1}{\psi(s, z)} F(s, z, u(s, z))  \right] \\
& \simeq \frac{1}{N} \sum_{i=1}^{N} G(t, x ; s_i, z_i) \frac{1}{\psi(s_i, z_i)} F(s_i, z_i, u(s_i, z_i)), \,\,\,\, \theta_i =(s_i, z_i) \sim \Psi, 
\end{aligned}
\end{equation}
where $\psi$ is the probability density function for $\Psi$.

We may interpret representation \eqref{999999} as
a RFNN of the form
\begin{equation}
\begin{aligned}
u(t,x)=\sum_{i=1}^{N} w_i \bar{\phi}(t,x ; \theta_i), \,\,\,\, \theta_i \sim \Psi,
\end{aligned}
\end{equation}
where
\begin{equation}
\begin{aligned}
(t, x) \mapsto \bar{\phi}(t, x ; \theta_i) = \frac{1}{\psi(s_i, z_i)} G(t, x ; s_i, z_i),
\end{aligned}
\end{equation}
for an appropriate choice of
weights $w=(w_1, \cdots, w_N)$. The fact that such a choice of weights exists can be deduced from \eqref{999999} by setting
\begin{equation*}
w_i = \frac{1}{N} F(s_i, y_i, u(s_i, z_i)), \,\,\, i=1, \cdots, N.
\end{equation*}
\color{black}
This completes the proof.
\end{proof}
Clearly, the above Proposition, by the properties of mild solutions guarantees that such coefficients $w_i$ exist, which is sufficient for a universal approximation type of theorem. For any practical purposes, the calculation of these coefficients can be done for example within the PINN framework.

\subsection{Choosing the basis functions}\label{section-basis}

In this section, we show that for the representation of mild solutions of a large class of parabolic PDEs, the basis functions can always be chosen to be Gaussian RBFs, irrespectively of whether the corresponding parabolic operator ${\cal L}$ admits a Green's function which is a Gaussian function. Although this can be deduced by the universal approximation properties of Gaussian RBFs, our approach offers an alternative proof of this result, which is elementary and not based on abstract analytical results and at the same time, using simply the law of large numbers, establishes the approximation theorem in terms of a randomized shallow neural network. Furthermore, Gaussian/RBFs require (nonlinear) tuning shape parameters which scales poorly in high dimensions. Instead, randomized Green's functions they give naturaly a Monte-Carlo approximation to the convolution of the kernel, thus scaling better with the number of collocation points and dimension.

\begin{proposition}\label{balcony-pagkrati-1}
Let $L$ be a general elliptic operator and $L_0=Tr(A D^2 \cdot )$, where $A$ is a constant coefficient positive definite matrix.
Then any solution $u$ of the (non-linear) parabolic equation  \eqref{44444}, such that $(L-L_0)u$ admits continuous representatives,
can be represented  in terms of a random feature neural network ${\cal R}(w, \phi)$, where $\phi$ is a Gaussian kernel function.
\end{proposition}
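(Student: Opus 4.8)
The plan is to reduce the statement to Proposition~\ref{prop-representation-1} (or its importance-sampling variant, Proposition~\ref{prop-sampling}) by a \emph{splitting} of the operator, in the spirit of freezing the principal part. Write $L = L_0 + (L-L_0)$ and observe that the constant-coefficient operator $L_0 = Tr(A D^2 \cdot)$ is exactly of the type \eqref{ito-gen} with drift $b=0$; consequently the Green's function $G_0$ of $\partial_t - L_0$ is known in closed form and is \emph{Gaussian}. Indeed, it is the transition density of the It\=o process $dX_t = S\,dW_t$ with $A = S S^{T}$, i.e.\ a Gaussian kernel in $x-z$ with covariance proportional to $(t-s)A$, generalizing \eqref{30-10-2025}. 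This is the feature function we will use: $\phi(t,x;\theta) := G_0(t,x;s,z)$ with $\theta=(s,z)$ is a Gaussian RBF.

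The key move is to absorb the discrepancy $(L-L_0)u$ into the source. Given any solution $u$ of \eqref{44444}, rewrite the equation as
\[
u_t = L_0 u + \widetilde F, \qquad \widetilde F(t,x):=(L-L_0)u(t,x) + F\big(t,x,u(t,x)\big),
\]
so that $u$ solves a parabolic problem whose principal part is $L_0$, driven by the modified source $\widetilde F$. Since $G_0$ is the Green's function of $\partial_t - L_0$, the representation \eqref{33333}--\eqref{55555} applies verbatim to this reformulation, yielding the mild representation
\[
u(t,x)=\int_{{\cal D}}G_0(t,x;0,z)u_0(z)\,dz + \int_0^T\!\!\int_{{\cal D}} G_0(t,x;s,z)\,\widetilde F(s,z)\,dz\,ds.
\]
Setting $u_0=0$ without loss of generality and then repeating the Monte-Carlo/law-of-large-numbers argument from the proof of Proposition~\ref{prop-representation-1}, with $\theta_i=(s_i,z_i)\sim{\cal U}([0,T]\times{\cal D})$, produces the estimator
\[
u_N(t,x)=\sum_{i=1}^{N} w_i\,G_0(t,x;s_i,z_i), \qquad w_i=\frac{1}{N}\,\widetilde F(s_i,z_i),
\]
with rate $\mathcal{O}(N^{-1/2})$. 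Because each feature $G_0(t,x;s_i,z_i)$ is a Gaussian kernel, $u_N \in {\cal R}(w,\phi)$ with $\phi$ Gaussian, which is exactly the asserted representation.

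The one point requiring care --- and the reason for the hypothesis --- is the admissibility of $\widetilde F$. Unlike $F$ in \eqref{44444}, the modified source depends on $u$ through the second-order operator $L-L_0$; expanding the divergence form of $L$ gives
\[
(L-L_0)u=\sum_{i,j}(a_{ij}-A_{ij})D_iD_j u+\sum_{i,j}(D_i a_{ij})D_j u+\sum_i b_i D_i u + c\,u,
\]
so $\widetilde F$ involves derivatives of $u$ up to second order. For the Duhamel representation and the subsequent pointwise sampling of $\widetilde F$ to be legitimate, $\widetilde F$ must be well defined pointwise and integrable in $(s,z)$; this is guaranteed precisely by the standing assumption that $(L-L_0)u$ admits continuous representatives, together with the continuity convention on ${\cal S}$. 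I expect this regularity bookkeeping to be the main (and essentially the only) obstacle, the remainder being an immediate repetition of Proposition~\ref{prop-representation-1}. Finally, it is harmless that the weights $w_i$ now depend on $u$ and its derivatives rather than on $u$ alone: for a universal-approximation/existence statement this is exactly the situation of Remark~\ref{remark-assoe-1}(2), since the Gaussian features $G_0$ remain independent of the unknown solution.
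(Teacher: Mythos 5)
Your proposal is correct and takes essentially the same route as the paper's own proof: the same splitting $L=L_0+(L-L_0)$, absorption of $(L-L_0)u+F(\cdot,\cdot,u)$ into the source, mild representation against the Gaussian kernel $G_0$, and the Monte-Carlo/law-of-large-numbers argument of Proposition~\ref{prop-representation-1} with weights $w_i=\tfrac{1}{N}\bigl[(L-L_0)u(s_i,z_i)+F(s_i,z_i,u(s_i,z_i))\bigr]$. Your discussion of why the hypothesis on continuous representatives of $(L-L_0)u$ is needed matches the paper's reasoning as well.
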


\begin{proof} For simplicity, without loss of generality, set the initial condition $u_0=0$. 

Let $u$ be a solution of \eqref{44444} such that the integral formulation  \eqref{55555} (importantly the integral on the RHS is well defined).  We express the equation in terms of 
\begin{equation}\label{balcony-pagkrati-2}
\begin{aligned}
\frac{\partial u}{\partial t} - L_{0} u = (L - L_0) u + F(t, x, u).
\end{aligned}
\end{equation}
By the assumption that $u$ is a solution of \eqref{44444} such that the integral representation \eqref{55555} the reformulation in \eqref{balcony-pagkrati-2} is well defined. This reformulation is the fundamental in our argument.

Let $G_0$ be the Green's function for the parabolic operator ${\cal L}_0 =\frac{\partial }{\partial t}- L_0$, which is a Gaussian kernel. We now express the equation in mild form as:
\begin{equation}
\begin{aligned}\label{assoee-1}
u(t, x) =\int_{0}^{T}\int_{{\cal D}} G_0(t, x ; s, z) \bigg[  (L- L_0)u(s, z) + F(s, z, u(s, z) \bigg] dz ds.
\end{aligned}
\end{equation}
The result follows from this representation working as in the proof of Proposition \ref{prop-representation-1}.  In particular, since by assumption the solution $u$ exists, we set it to a known function $u=w$ and express the RHS of  \eqref{assoee-1}, as
\begin{eqnarray}\label{assoee-2}
    J(t, x):=\int_{0}^{T}\int_{{\cal D}} G_0(t, x ; s, z) \bigg[  (L- L_0)w(s, z) + F(s, z, w(s, z) \bigg] dz ds.
\end{eqnarray}
Since $w$ is assumed as known, under the extra assumption that the function or a version of it  and the function $(L- L_{0}) w$, admit pointwise representation (this assumption can be checked using regularity results for the solution of problem \eqref{44444}),  we can approximate $I$ in terms of a Monte-Carlo approximation as
\begin{equation}\label{assoee-4}
J_{N}(t, x)= \sum_{i=1}^{N}
\underbrace{\frac{1}{N}\{(L- L_0) w(s_i, z_i) + F(s_i, z_i , w(s_i, z_i)) \}}_{:= w_i} G_{0}(t, x ; s_i, z_i),
\end{equation}
where $\theta_i :=(s_i, z_i)$, $i=1, \cdots, N$, is a sample of i.i. d. random variables, $\theta_i \sim_{i.i.d} {\cal U}([0, T] \times {\cal D})$.
Combining \eqref{assoee-1} and \eqref{assoee-4} we obtain the approximation
\begin{equation}\label{assoee-10}
u(t,x) \simeq J_{N}(t, x).
\end{equation}

We emphasize that the real valued random variables $w_i$ defined  in \eqref{assoee-4} above, do not depend on $(t, x)$, so that \eqref{assoee-4} can be  interpreted as a shallow random neural network ${\cal R}(w,\phi)$
with $w=(w_1, \cdots, w_{N})$ as in \eqref{assoee-4} above, and $\phi=(\phi_1, \cdots, \phi_N)$ the random features $\phi_{i}=G_0( \cdot, \cdot ; s_{i}, z_{i})$ with $\theta_{i}=(s_{i}, z_{i})$ as above, which is a set of random Gaussian features. We emphasize, that in the general case here, the random weights $w$ depend on the solution $u=w$. However, the point of this proposition is not to show an explicit representation, but rather the {\bf existence} of a shallow random neural network  with Gaussian features representing the solution. Since the solution is assumed to exist \eqref{assoee-4} combined with \eqref{assoee-1} allows us to conclude.
 
The argument certainly holds for classical solutions.

\end{proof}

 Proposition \ref{balcony-pagkrati-1} is important since it allows for a wide range of choices of representing functions
related to the Green's function of the operator ${\cal L}_{0} = \frac{\partial }{\partial t}- L_0$. Importantly for the choice 
$L_0=Tr(A D^2 \cdot )$  this provides us with the set of Gaussians. In the particular case that $A=D I_{d}$, with $D>0$ a constant,  $L_0=D \Delta$, the Laplacian,  and the  relevant Gaussian is of the form  \eqref{30-10-2025}.

\subsection{Implementation}
In this section we will consider various forms of the representation \eqref{33333}, that can be handled numerically. In particular, we will first see how we can mathematically eliminate the singularities that arise in the Gaussian kernels, resulting in a new form of the representation that can be approximated even by naive schemes (e.g., integral discretization and/ or naive Monte Carlo sampling). As a next step, we consider importance sampling, (based on Proposition \ref{prop-sampling}), as a different lens through which, we can look at the integral approximations and which allows us to consider high-dimensional problems, as well. This analysis is useful not only for direct Monte Carlo estimations but will inspire the custom RFNN models we construct, based on the theory above. 

We note that throughout the remainder of this paper we will be considering the linear PDE case, in order to introduce and benchmark the method. The theoretical framework however, as detailed above, provides the necessary results to extend the proposed methods to non-linear PDEs in a future work.

\subsubsection{Dealing with the singularities}\label{trasformations-section}

We first consider a change of variables that is required to ensure numerical accuracy of the representation \eqref{33333}.  As mentioned, one can see that the Gaussian kernels in the second term of \eqref{33333} exhibit singularities as $s \rightarrow t$ and $z \rightarrow x$. Hence, during the Monte Carlo sampling, problematic terms may occur by selecting $(s_i, z_j)$ such that $\|(s_i, z_j) - (t,x)\| < \delta$, for small enough $\delta$. On the other hand, excluding such values may lead to the loss of important data near the point of interest $(t,x)$, leading to large errors in the approximation, particularly in cases with steep gradients.
\par Hence, we are tasked with finding a way to deal with these singularities numerically. As can be done in many cases with integrable functions that blow-up, we will consider suitable transformations.
\par Depending on the spatial dimension $d$, we require slightly different handling. We outline each of the cases below, starting with the second integral in our representation \eqref{33333}. For convenience, we will adopt the following notation throughout the remainder of this paper:
\begin{equation}\label{I-integral}
   I(t,x)
=\int_{\mathbb{R}^d }\frac{1}{({4\pi D\,t})^{d/2}}
\exp\!\Bigl(-\frac{\|x - z\|^{2}}{4D\,t}\Bigr)\,u_{0}(z)\,\mathrm{d}z, 
\end{equation}

\begin{equation}\label{J-integral}
J(t,x) := \int_{0}^{t}\!\!\int_{\mathbb{R}^d}
  \frac{1}{\bigl(4\pi D\,(t-s)\bigr)^{\tfrac d2}}
  \exp\!\Bigl(-\frac{\|x-z\|^2}{4D\,(t-s)}\Bigr)
  \,F(s,z)\,dz\,ds.
\end{equation}

\paragraph*{Case \(d=1\)}
Here, the prefactor is \((t-s)^{-1/2}\) and by setting $\tau = \sqrt{\,t - s\,},$ we obtain:
\begin{equation}\label{transormed-J-1}
J(t,x)
=\int_{0}^{\sqrt t}
  \!\int_{\mathbb{R}} \frac{2}{\sqrt{4\pi D}}
    \exp\!\Bigl(-\frac{\|x-z\|^2}{4D\,\tau^2}\Bigr)
    \,F\bigl(t - \tau^2,\;z\bigr)\,dz
  \,d\tau.
\end{equation}

\paragraph*{Case \(d=2\)}
The prefactor is \((t-s)^{-1}\) and we make the change of variables $ \tau = -\ln\bigl(t-s\bigr)$ and get:
\begin{equation}\label{transormed-J-2}
J(t,x)
=\int_{-\ln t}^{\infty}
  \!\int_{\mathbb{R}^2} \frac{1}{4\pi D}
    \exp\!\Bigl(-\frac{\|x-z\|^2}{4D\,e^{-\tau}}\Bigr)
    \,F\bigl(t - e^{-\tau},\,z\bigr)\,dz
  \,d\tau.
\end{equation}

\paragraph*{Case \(d\geq 3\)}
Finally, in this general case we have the term $(t-s)^{-d/2}$ that can cause numerical blow-ups. In this case, we define:
\begin{equation}\label{transormed-J-ge2}
\alpha = 1 - \frac{d}{2},
\quad
\tau = (t-s)^{\,\alpha}.
\end{equation}
Then, we have:
\begin{equation}
J(t,x)
=\int_{t^\alpha}^{\infty}
 \!\int_{\mathbb{R}^d} \frac{1}{\lvert\alpha\rvert\,(4\pi D)^{d/2}}
   \exp\!\Bigl(-\frac{\|x-z\|^2}{4D\,\tau^{1/\alpha}}\Bigr)
   \,F\bigl(t - \tau^{1/\alpha},\,z\bigr)\,dz
 \,d\tau.
\end{equation}

We now move on to the homogeneous part of the Gaussian integral representation \eqref{I-integral}.
For this term, notice that as $t \rightarrow 0$ we again face a numerical blow-up. To approximate this term numerically avoiding large errors due to the singularity, we set $z = x + y\,\sqrt{4D\,t},$ with $\mathrm{d}z = \sqrt{4D\,t}\,\mathrm{d}y$. Then we get:
\begin{equation}\label{transformed-I}
  I(t,x)
=\int_{\mathbb{R}^d}\frac{1}{\sqrt{\pi}}\,e^{-\|y\| ^{2}}
\,u_{0}\bigl(x + y\,\sqrt{4D\,t}\bigr)\,\mathrm{d}y.  
\end{equation}
\par For the transformations, since we have differing versions according to the spatial dimension $d$, we will use $\mathcal{D}(t)$ for the transformed temporal domain over which we are integrating. For example, for $d=1, d=2$, $\mathcal{D}(t) := [0,\sqrt{t}]$ and $[-\ln t, \infty)$, respectively. Furthermore, we define the function $g:\mathcal{D}(t) \rightarrow \mathbb{R}_+$, to represent the function of $\tau$ obtained by the change of variables for the forcing term (e.g., for $d=1$, $g(\tau) = \tau^2$). Finally, for consistency and to easily distinguish between the two cases we will use $\mathcal{J}(t,x)$ and $\mathcal{I}(t,x)$ when referring to the transformed versions of the integrals, given by \eqref{transormed-J-1}-\eqref{transormed-J-ge2} and \eqref{transformed-I}, respectively. Therefore, we can write:
\begin{equation}\label{gen-transformed-J}
\begin{aligned}
    \mathcal{J}(t,x) =\int_{\mathcal{D}(t)}
 \!\int_{\mathbb{R}^d} \frac{1}{C(d)(4\pi D)^{d/2}}
   \exp\!\Bigl(-\frac{\|x-z\|^2}{4D\,g(\tau)}\Bigr)
   \,f\bigl(t - g(\tau),\,z\bigr)\,dz
 \,d\tau, \\
 \mathcal{I}(t,x) =\int_{\mathbb{R}^d}\frac{1}{\sqrt{\pi}}\,e^{-\|y\|^{2}}
\,u_{0}\bigl(x + y\,\sqrt{4D\,t}\bigr)\,\mathrm{d}y,
 \end{aligned}
\end{equation}
for any $d\geq 1$, and where $C(d)$ is a constant given by:
\begin{equation}
    C(d) = 
    \begin{cases}
        \frac{1}{2}, \,\,\,\ d = 1, \\
        1 \,\,\,\ d = 2, \\
        \lvert \alpha \rvert \,\,\,\ d \geq 3.
    \end{cases}
\end{equation}

We can now consider the naive Monte Carlo approximation of the integral representation efficiently. The transformed versions of the integrals provide expressions for which we can consider naive sampling schemes which do not exhibit singularities. To this end, we sample from $[-A,A]^{d}$, for some large enough constant $A>0$, drawing $\{y_{j}\}_{j=1}^{M}\sim{Unif}([-A,A]^d)$ with density $q(y)=1/{(2A)}^d$. This gives the unbiased Monte Carlo estimator:
\begin{equation}\label{transformed-I-MC}
\mathcal{I}(t,x)\approx
\frac{1}{M_0}
\sum_{j=1}^{M_0}
\frac{(2A)^d}{\sqrt{\pi}}\,e^{-\|y_{j}\|^{2}}\,
u_{0}\!\bigl(x+y_{j}\sqrt{4D\,t}\bigr).
\end{equation}
\par For $\mathcal{J}(t,x)$ we need to sample from the spatial and temporal domains, $\mathbb{R}^d$ and $\mathcal{D}(t)$. (Note that, if necessary such as for $d=3$, we truncate $\mathcal{D}(t)$). We then draw $\{\tau_j\}_{j=1}^{M_1} \sim {Unif}({\mathcal{D}}(t))$, $\{z_{j}\}_{j=1}^{M_1}\sim{Unif}([-A,A]^d)$ to approximate:
\begin{equation}\label{transformed-J-MC}
 \mathcal{J}(t,x) \approx
\frac{\mu({\mathcal{D}}(t))(2A)^{d}}
     {M\,C(d)(4\pi D)^{d/2}}
\sum_{j=1}^{M}
\exp\!\Biggl(
  -\frac{\lVert x - z_{j}\rVert^{2}}
        {4D\,g(\tau_{j})}
\Biggr)
F\!\bigl(t - g(\tau_{j}),\,z_{j}\bigr),
\end{equation}
where $\mu(A)$ represents the Lebesgue measure of the truncated interval $A$.

\par As an example, consider the case where $d \geq 3$. We fix a large enough $T > t^{\alpha}$ for the truncation of the temporal integral and drawn uniformly from $[t^{\alpha},T]$, $\{z_{j}\}_{j=1}^{M}\sim {Unif}([-A,A]^d)$, and the estimator is:
\begin{equation}\label{transformed-J-MC}
\mathcal{J}(t,x)
\approx
\frac{(T - t^{\alpha})\,(2A)^{d}}
     {M\,|\alpha|\,(4\pi D)^{d/2}}
\sum_{j=1}^{M}
\exp\!\Biggl(
  -\frac{\lVert x - z_{j}\rVert^{2}}
        {4D\,\tau_{j}^{\,1/\alpha}}
\Biggr) F\!\bigl(t - \tau_{j}^{\,1/\alpha},\,z_{j}\bigr).
\end{equation}

Despite the mathematical convenience of the transformed versions above, we note that these forms are mainly applicable for low dimensional problems. This is due to the effect of large exponents that still appear in \eqref{transformed-J-MC}, which for (very) large values of $d$ can lead to numerical intractability and high errors. This is dealt with in the following section.

\subsubsection{Importance sampling for high-dimensions}\label{sec-imp-samp}
In the above, we algebraically eliminated the singularities by means of an appropriate transformation of variables and then considered a naive sampling scheme for the MC approximation. Even though this approach indeed avoids any numerical blow-ups and provides high accuracy in low dimensions, high variance can occur for larger values of $d$. Indeed, from \eqref{transformed-I-MC} and \eqref{transformed-J-MC}, we can see the direct effects of the ``curse of dimensionality'', whereby extremely large values of the sample number $M$ will be required for accurate and low-variance estimations. This, coupled with the aforementioned numerical difficulties of the transformed kernels, motivate the following handling.
\par To circumvent this issue, we may consider the original forms of the integrals in combination with importance sampling. This approach will allow us to directly use the Gaussians as sampling distributions rather than functions to evaluate at distinct points, avoiding direct computation of regions near singularities. At the same time, the dimensionality of the problem is also ``absorbed'' by the sampling distribution, allowing us to avoid the blow-up due to high powers of the normalization constants, present in \eqref{transformed-J-MC}.  
\par Considering initially the integral $I(t,x)$ as given in \eqref{I-integral}, we have by definition: 
\begin{equation}
    I(t,x) = \mathbb{E}[u_0(Z)],
\end{equation}
where $Z\sim \mathcal{N}(x, 2DtI_d)$ with corresponding density function $G(t,x;0,z)$. Hence, we can generate $z_j = x+\sqrt{2Dt}\,\eta_j$ where $\{\eta_j\}_{j=1}^{M_0} \sim \mathcal{N}(0, I_d)$ and calculate: 
\begin{eqnarray}\label{I-integral-IS}
    {I}_{IS}(t,x) \approx \frac{1}{M_0} \sum_{j=1}^{M_0} u_0(z_j).
\end{eqnarray}
For the forcing integral $J(t,x)$ we will have to sample appropriately to account for the temporal and spatial dimensions: for a fixed $s$ value, the inner integral of $J(t,x)$, as given in \eqref{J-integral}, can be written as:
\begin{equation}
    \int_{\mathbb{R}^d} G(t,x;s,z)f(s,z)dz = \mathbb{E}[f(s,Z)|S=s],
\end{equation} 
where $Z|S=s \sim \mathcal{N}(x, 2D(t-s)I_d)$. Then, we have: 
\begin{equation}
    J(t,x) = \int_0^t \mathbb{E}[f(s,Z)|S=s]ds = \int_{0}^{\infty} {\bf 1}_{[0, t]}(s) \mathbb{E}[f(s,Z)|S=s]ds.
\end{equation}
Therefore, we can sample $S$ from the uniform distribution in $[0,t]$, by accounting for the density $q(s) = 1/t$ and 
the resulting MC approximation is given by:
\begin{equation}\label{J-integral-IS}
    {J}_{IS}(t,x) \approx \frac{t}{{M_1}} \sum_{j=1}^{M_1} F(s_j, z'_j),
\end{equation}
where $s_j = r_jt$, $\{r_j\}_{j=1}^{M_1} \sim {Unif}([0,1]), \{\xi_j\}_{j=1}^{M_1} \sim \mathcal{N}(0, I_d)$ and $z'_j = x+ \sqrt{2D(t-s_j)}\,\xi_j$, for $j=1,\dots M_1$.
\par We note that extending this approach to forcing terms of the form $F(t,x,u)$, we would get the implicit form:
\begin{equation}\label{J-integral-IS}
    u(t,x) \approx  \frac{1}{M_0} \sum_{j=1}^{M_0} u_0(z_j) + \frac{t}{M_1} \sum_{j=1}^{M_1} F(r_jt, z'_j, u(r_jt, z'_j)),
\end{equation}
with the random samples as above.

 We now move on to show how these formulations of the integral representation will form the basis of the custom RFNNs.

\subsubsection{PINN implementation: architecture and training}\label{sec:HEATNETs}
Monte Carlo methods have well-known drawbacks, such as high computational requirements associated with high number of samples and the need to approximate the integrals at each point $(t,x)$. A different approach would be to take advantage of the power of neural networks as universal approximators, motivated by recent literature examining ``numerics-informed'' machine learning approaches, where classical numerical methods are used to inspire different machine learning architectures or training approaches.  In this work, we consider a Random Feature Neural Network inspired by the mild representation \eqref{33333} and the corresponding numerical implementations as given in section \ref{trasformations-section} and \ref{sec-imp-samp}, which will learn the appropriate weights so that the model is a universal approximator, rather than require an expensive Monte Carlo calculation at each $(t,x)$.

\par This can be done using PINNs. We first recall that in PINNs, the solution any given PDE is modeled as a neural network, which is trained using a loss function that encapsulates the ``physics'' of the PDE as well as the initial (and boundary, if applicable) conditions. The standard loss function construction then consists of two components: one for the PDE residual in the interior domain and one for the initial condition residual. Then, the approximation of the solution, $u_{\theta}(t,x)$, is calculated by minimizing the loss function, formulated as:
\begin{equation}\label{loss-pinn-pde}
\mathcal{L}_{\theta} = \frac{1}{N_{PDE}} \sum_{i=1}^{N_c} \Bigl( \mathcal{A}\left[u_{\theta}\right](t_i, x_i) \Bigr)^2+\frac{1}{N_b} \sum_{i=1}^{N_{IC}} \Bigl( u_{\theta}(0, x'_i) - u(0, x'_i) \Bigr)^2,
\end{equation}
where $\mathcal{A}$ is the operator $\mathcal{A}[u](t,x):= \frac{\partial u}{\partial t}(t,x) - D \Delta u(t,x) -  F(t,x, u)$ the right with $\{(t_i,x_i)\}_{i=1}^{N_{PDE}}$ being collocation points in $[0,T]\times \mathbb{R}^d$ and $\{x'_i\}_{i=1}^{N_{IC}}$ are collocation points in $\mathbb{R}^d$ over which the initial condition is calculated.
\par We now describe the proposed RFNN models, termed HEATNETs.

\paragraph{Model architecture and construction.}\label{section-RFNN}

\par As stated, we will be considering the linear PDE with $F(t,x,u) \equiv F(t,x)$. The HEATNET is constructed using either the Gaussian kernels or importance sampling terms as features. We describe both these cases explicitly.
\par Consider first the RFNN with transformed Gaussian kernels. The form is given by:
\begin{equation}\label{PINN-linear}
\tilde{u}(t,x)=
\sum_{j=1}^{M_0}w_j^{(0)}\,\varphi_j^{(0)}(t,x)+
\sum_{j=1}^{M_1}w_j^{(1)}\,\varphi_j^{(1)}(t,x),
\end{equation}
where 
\begin{equation}\label{features-transformed}
\begin{aligned}
\varphi_j^{(0)}(t,x)=
\frac{1}{\sqrt{\pi}}\,
e^{-\|y_j\|^2}\,
u_{0}\!\bigl(x + y_j\,\sqrt{4Dt}\bigr), \\
\varphi_j^{(1)}(t,x) =
\mathbf1_{\mathcal{D}}(t)(\tau_i)\,
\frac{1}{C(d){4\pi D}^{d/2}}\,
\exp\!\Bigl(-\frac{(x - z_j)^2}{4D\,g(\tau_j)}\Bigr)\,
F\!\bigl(t-g(\tau_j),z_j\bigr), 
\end{aligned}
\end{equation}
and random samples $\{y_j\}_{j=1}^{M_0}, \{z_j\}_{j=1}^{M_1}\sim Unif([-A,A]^d)$ and $\{\tau_j\}_{j=1}^{M_1} \sim  Unif\bigl({\mathcal{D}}(T)\bigr)$, for some fixed time horizon $T>0$. The random samples are generated during the construction of the neural network and remain fixed throughout the subsequent training and testing. Therefore, note that since $\mathcal{J}(t,x)$ depends on the input value $t$, our samples $\tau_j$ must sample the entire temporal domain during the construction of the neural network. Hence, $\varphi^{(1)}$ includes the indicator term that activates only the nodes corresponding to the $\tau$ samples within the acceptable range indicated by the input of the neural network $t$. 

\par Alternatively, we can construct the random features using the importance sampling representation. In this case we get \eqref{PINN-linear} with the random features given by:

\begin{equation}\label{features-is}
\begin{aligned}
\varphi_j^{(0)}(t,x)=
u_{0}\!\bigl(x + \sqrt{4Dt}\, \eta_j\bigr), \\
\varphi_j^{(1)}(t,x) = t F( r_i t, x+ \sqrt{2D(t-s_it)}\xi_i)
\end{aligned},
\end{equation}
where $s_i = r_it$ and we sample $\{\eta_j\}_{j=1}^{M_0}, \{(r_j, \xi_j)\}_{j=1}^{M_1}$ as described in section \ref{sec-imp-samp}, during the model construction. We will refer to these two version of the RFNN as $\tilde{u}_G(t,x)$ and $\tilde{u}_{IS}(t,x)$ for the Gaussian and importance sampling random features, respectively. This custom construction is shown schematically in Fig. \ref{fig:RFNN}.

\par As a final important note, by Proposition \ref{prop-sampling}, it is clear that many other HEATNETs can be constructed, using different sampling techniques. In addition, Quasi-Monte Carlo methods can be used to generate low-discrepancy points which can improve model variance (we will consider such an implementation using Sobol sampling in examples in the subsequent section). 

\begin{figure}
\centering
\begin{tikzpicture}[
  x=2.6cm, y=1.2cm,
  node distance=1cm and 2cm,
  every neuron/.style={circle, draw, minimum size=9mm, inner sep=0pt},
  input neuron/.style={circle, draw, minimum size=9mm, inner sep=0pt},
  output neuron/.style={circle, draw, minimum size=9mm, inner sep=0pt},
  >=stealth
]
\node[input neuron] (I1) at (0,  0.3) {$t$};
\node[input neuron] (I2) at (0, -0.9) {$x$};

\node[every neuron, fill=green!25] (H1) at (1,  1.6) {$\varphi^{(0)}_1$};
\node[every neuron, fill=green!25] (H2) at (1,  0.8) {$\vdots$};
\node[every neuron, fill=green!25] (H3) at (1,  0.0) {$\varphi^{(0)}_{M_0}$};
\node[every neuron, fill=red!25]   (H4) at (1, -0.8) {$\varphi^{(1)}_1$};
\node[every neuron, fill=red!25]   (H5) at (1, -1.6) {$\vdots$};
\node[every neuron, fill=red!25]   (H6) at (1, -2.4) {$\varphi^{(1)}_{M_1}$};

\node[output neuron] (O) at (2, -0.2) {$\tilde{u}(t,x)$};

\foreach \I in {I1,I2}
  \foreach \H in {H1,H2,H3,H4,H5,H6}
    \draw[->] (\I) -- (\H);



\draw[->] (H1) -- node[midway, font=\scriptsize, text=green!50!black] {$w^{(0)}_1$} (O);
\draw[->] (H2) -- node[midway,  font=\scriptsize, text=green!50!black] {$\cdots$} (O);
\draw[->] (H3) -- node[midway, font=\scriptsize, text=green!50!black] {$w^{(0)}_{M_0}$} (O);

\draw[->] (H4) -- node[midway,  font=\scriptsize, text=red!50!black] {$w^{(1)}_1$} (O);
\draw[->] (H5) -- node[midway,  font=\scriptsize, text=red!50!black] {$\cdots$} (O);
\draw[->] (H6) -- node[midway, font=\scriptsize, text=red!50!black] {$w^{(1)}_{M_1}$} (O);

\node[anchor=west, align=left, text=green!50!black,
      text width=7.0cm, draw, dashed, rounded corners,
      inner sep=4pt, fill=white, label={[font=\footnotesize]90:Initial condition feature}]
  (Eq0) at (2.2,0.9)
  {$\varphi^{(0)}_{j}(t,x)= 
  \begin{cases}
       \frac{1}{\sqrt{\pi}}\,
e^{-\|y_j\|^2} u_{0}\!\bigl(x + y_j\,\sqrt{4Dt}\bigr), \,\ \text{for Gaussian} \\
u_{0}\!\bigl(x + \sqrt{4Dt}\, \eta_j\bigr), \,\ \text{for Importance Sampling}

  \end{cases}
  $
  };

\node[anchor=west, align=left, text=red!60!black,
      text width=12cm, draw, dashed, rounded corners,
      inner sep=4pt, fill=white, label={[font=\footnotesize]90:Forcing term feature}]
  (Eq1) at (1.5,-2.2)
  {$\varphi^{(1)}_{j}(t,x) = 
  \begin{cases}
\mathbf1_{\mathcal{D}(t)}(\tau_i)\,
\frac{1}{C(d){4\pi D}^{d/2}}\,
\exp\!\Bigl(-\frac{(x - z_j)^2}{4D\,g(\tau_j)}\Bigr)\,
F\!\bigl(t-g(\tau_j),z_j\bigr), \,\ \text{for Gaussian} \\
t F( r_i t, x+ \sqrt{2D(t-r_i t)}\xi_i), \,\ \text{for Importance Sampling}
\end{cases}
  $};

\draw[dashed,->,>=stealth, green!50!black]
  (H2.east) .. controls +(0.7,0.0) and +(-0.7,0.25) .. (Eq0.west);

\draw[dashed,->,>=stealth, red!60!black]
  (H5.east) .. controls +(0.7,0.0) and +(-0.7,0.0) .. (Eq1.west);

\end{tikzpicture}
\caption{The Random Feature Neural Network used in the HEATNETs framework: The model consists of a single hidden layer with $M = M_0+M_1$ nodes. Each node corresponds to a random feature with $\varphi^{(0)}_j(t,x), \varphi^{(1)}_j(t,x)$ given by \eqref{features-transformed} or \eqref{features-is}. The weights from the inputs $t \in [0,T], x\in \mathbb{R}^d$ are units, by construction, and the weights from the hidden layer to the output are $w = [w^{(0)} \,\,\ w^{(1)}]^T$. }\label{fig:RFNN}
\end{figure}
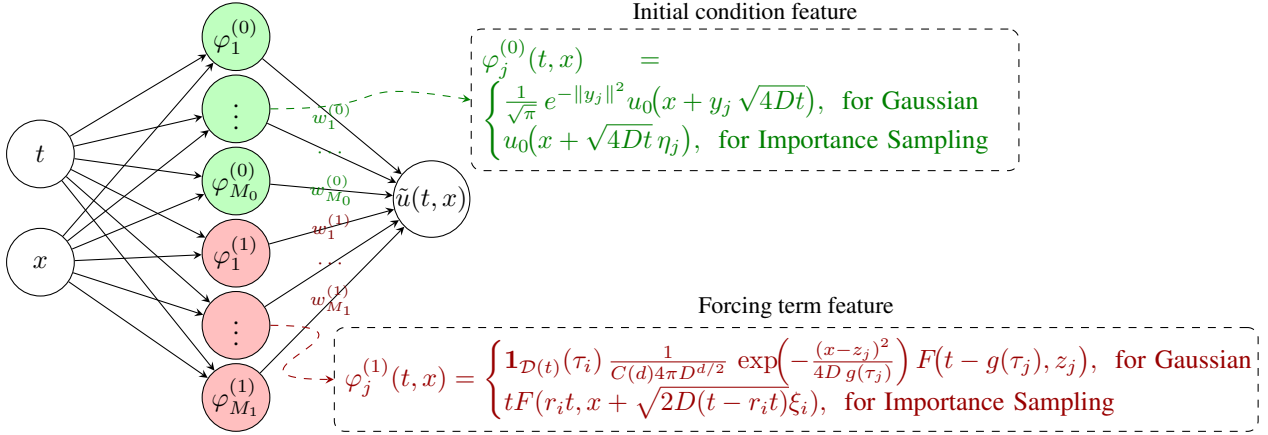

\paragraph{Model training.} \label{HEATNET-training}
\par As discussed, to train the model, we apply the  PINN approach, as given in \eqref{loss-pinn-pde}. Notice that the RFNN has a total of $M = M_0 + M_1$ nodes and is a linear function of the unknown weights vector $w = [w^{(0)}\,\, w^{(1)}]^T$, corresponding to the random feature vector $\varphi = [\varphi^{(0)}\,\, \varphi^{(1)}]^T$. Hence, differentiating the model \eqref{PINN-linear} is straightforward, as the derivatives pass to the features to obtain $\phi_t^{(i)}(t,x), \phi_x^{(i)}(t,x)$ and $\Delta \phi^{(i)}(t,x)$, for $i = 0,1$.

\par Therefore, training  becomes a simple Ordinary Least Squares (OLS) problem, as follows: we enforce the PDE at $N_{PDE}$ collocation points $\{(t_i,x_i)\}_{k=1}^{N_{PDE}} \subset (0,T] \times \mathbb{R}^d$, yielding the linear system
\begin{equation}
    A_{\rm PDE}\,w = b_{\rm PDE},
\end{equation}
where $A_{PDE} \in \mathbb{R}^{N_{PDE} \times M}, b_{PDE} \in \mathbb{R}^{N_{PDE}}$ are given by:
\begin{equation}\label{linear-ols}
   \bigl(A_{\rm PDE}\bigr)_{k,j}
=\Bigl[\partial_t\varphi_j - D\Delta\varphi_j(t_k,x_k) \Bigr],
\quad
(b_{\rm PDE})_k = F(t_k,x_k).
\end{equation}
Enforcing the initial condition at $N_{IC}$ points $\{x_{\ell}\}_{\ell=1}^{N_{IC}}$ gives 
\begin{equation}
A_{\rm IC}\,w = b_{\rm IC},
\end{equation}
with entries 
$(A_{\rm IC})_{\ell,j}=\varphi_j(0,x_\ell)$
and
$(b_{\rm IC})_\ell=u_0(x_{\ell})$,
$A_{IC} \in \mathbb{R}^{N_{IC} \times M}$, $b \in \mathbb{R}^{N_{IC}}$. Putting both blocks together yields the least‐squares problem:
\begin{equation}\label{eq:weighted_ridge_obj}
\hat{w} \;=\; \arg_{w}\min\;
\bigl\|A_{\mathrm{PDE}}w-b_{\mathrm{PDE}}\bigr\|_2^2
\;+\;\lambda_{{IC}}^{\,2}\,\bigl\|A_{\mathrm{IC}}w-b_{\mathrm{IC}}\bigr\|_2^2
\;+\;\lambda_{\mathrm{ridge}}\,\|w\|_2^2,
\end{equation}
where constants $\lambda_{IC}$ is the appropriate weighting of the initial condition requirement and $\lambda_{\mathrm{ridge}}$ is the regularization coefficient (if required). This can be written alternatively by constructing $A \in \mathbb{R}^{N\times M}, b\in \mathbb{R}^N$, with $N=N_{PDE} + N_{IC}$, as:
\begin{equation}
\label{eq:stacked_blocks}
A \;=\;
\begin{pmatrix}
A_{\mathrm{PDE}}\\[2pt]
\sqrt{\lambda_{\mathrm{IC}}}\,A_{\mathrm{IC}}
\end{pmatrix},
\qquad
b \;=\;
\begin{pmatrix}
b_{\mathrm{PDE}}\\[2pt]
\sqrt{\lambda_{\mathrm{IC}}}\,b_{\mathrm{IC}}
\end{pmatrix},
\end{equation}
and solving the normal equations:
\begin{equation}
\label{eq:ridge_normal_eqs}
\bigl(A^{\!\top}A+\lambda_{\mathrm{ridge}}I\bigr)\,w
\;=\; A^{\!\top}b.
\end{equation}
Naturally, when $\lambda_{ridge} = 0$, we obtain the solution as: 
\begin{equation} \label{least-square-pinn}
\hat{w}= A^{+}b,
\end{equation}
where $A^+$ is the Moore-Penrose inverse. The full algorithm is given in Algorithm \ref{algorithm-RFNN}.

Concluding, it is important to remark on the usefulness of the proposed RFNN implementation; by implementing the PINN training approach, we obtain a universal solution that still approximates the Monte Carlo sum pointwise (either $u_{G}(t,x)$ or $u_{IS}(t,x)$), but also provides the solution to the PDE for any $(t,x)$. In practice, this allows us to consider significantly fewer random samples in the RFNN (compared to those needed for the standard Monte Carlo estimations), and the corresponding learned weights are globally optimal for all choices $(t,x) \in [0,T] \times \mathbb{R}^d$. In other words, HEATNETs are a ``best-of-both worlds" approach, whereby the model directly simulates the closed-form mathematical expression, but the coefficients/parameters are determined efficiently and inexpensively using machine learning training.

\begin{algorithm}[htp!]
\caption{HEATNETs approach: RFNN construction (with Gaussian or importance sampling features) and training}\label{algorithm-RFNN}
\begin{algorithmic}[1]
\Require Time horizon $T>0$, truncated spatial domain $[-A,A]^d \subset\mathbb{R}^d $ and corresponding $[-\tilde{A}, \tilde{A}]^d \subset [-A,A]^d$, $\,u_0(x),\,F(t,x)$; sizes $M_0,M_1, M = M_0 + M_1$; counts $N_{\mathrm{PDE}},N_{\mathrm{IC}}$, $N = N_{PDE} + N_{IC}$; weights $\lambda_{\mathrm{IC}},\lambda_{\mathrm{ridge}}$
\Ensure  weights $w = [{w}^{(0)} \,\, w^{(1)}]^T \in\mathbb{R}^{M}$, where $w^{(0)} \in \mathbb{R}^{M_0}, w^{(1)} \in \mathbb{R}^{M_1}$ and model $\tilde{u}(t,x)$ ($\tilde{u}_{G}(t,x)$ or $\tilde{u}_{IS}(t,x)$, as defined in section \ref{section-RFNN}).
\State \textbf{Sample and freeze features:}
       draw $\{y_j\}_{j=1}^{M_0}, \{z_j\}_{j=1}^{M_1}\sim Unif([-A,A]^d)$ and $\{\tau_j\}_{j=1}^{M_1} \sim Unif\bigl({\mathcal{D}}(T)\bigr)$ (equivalently, $\{\eta_j\}_{j=1}^{M_0} \sim \mathcal{N}(0,I_d), \{r_j\}_{j=1}^{M_1}\sim Unif([0,1]), \{\xi_j\}_{j=1}^{M_1}\sim \mathcal{N}(0,I_d) $ for the importance sampling features).
(Alternatively for a Quasi-Monte Carlo approach, draw low-discrepancy points using e.g., Sobol sequences).

\State \textbf{Define feature maps:} $\varphi = [\varphi^{(0)} \,\, \varphi^{(1)}]^T \in \mathbb{R}^{M}$, with $\{\varphi^{(0)}_m\}_{m=1}^{M_0}, \{\varphi^{(1)}_m\}_{m=1}^{M_1}$ as given in \eqref{features-transformed} (equivalently \eqref{features-is} for the importance sampling RFNN).
\State \textbf{Training points:} sample $\{(t_k,x_k)\}_{k=1}^{N_{\mathrm{PDE}}}\subset (0,T]\!\times [-\tilde{A}, \tilde{A}]^d$ and $\{x_\ell\}_{\ell=1}^{N_{\mathrm{IC}}}\subset [-A,A]^d$.
\State \textbf{Build PDE and IC condition blocks:} \begin{itemize}
    \item $A_{PDE} \in \mathbb{R}^{N_{PDE} \times M}$, $b_{PDE} \in \mathbb{R}^{N_{PDE}}$ as $(A_{\mathrm{PDE}})_{km}\!=\!(\partial_t\varphi_m-D\Delta\varphi_m)(t_k,x_k)$ (via automatic or analytic differentiation) and $(b_{\mathrm{PDE}})_k\!=\!F(t_k,x_k)$.
    \item $A_{IC} \in \mathbb{R}^{N_{IC} \times M}$, $b_{IC} \in \mathbb{R}^{N_{IC}}$ as $(A_{\mathrm{IC}})_{\ell m}\!=\!\varphi_m(0,x_\ell)$ and $(b_{\mathrm{IC}})_\ell\!=\!u_0(x_\ell)$.
    \item Create $A \in \mathbb{R}^{N\times M}$, $b \in \mathbb{R}^N$ by stacking $A=\begin{pmatrix}A_{\mathrm{PDE}}\\ \sqrt{\lambda_{\mathrm{IC}}}\,A_{\mathrm{IC}}\end{pmatrix}$,\;
       $b=\begin{pmatrix}b_{\mathrm{PDE}}\\ \sqrt{\lambda_{\mathrm{IC}}}\,b_{\mathrm{IC}}\end{pmatrix}$.
\end{itemize}

\State \textbf{Solve ridge–LS:}
       obtain $\hat{w}$ as the solution to $(A_\lambda^\top A_\lambda+\lambda_{\mathrm{ridge}}I_{M \times M})w=A_\lambda^\top b_\lambda$.
\State \textbf{Return} $\tilde{u}(t,x)=\sum_{m=1}^{M_0} \hat{w}^{(0)}_m\,\varphi^{(0)}_m(t,x) + \sum_{m=1}^{M_1} \hat{w}^{(1)}_m\,\varphi^{(1)}_m(t,x) $.
\end{algorithmic}

\end{algorithm}

\section{Numerical examples}\label{num-examples}
 
To assess the performance of HEATNETs, we apply them for the solution of various parabolic PDEs with known analytical solutions, starting from 1D problems and extending up to high-dimensional cases reaching for our illustrations up to 2000 dimensions. All the experiments that follow were run using a Google Collab Python Notebook service with an A100 GPU with 80GB memory. 

\subsection{HEATNETs in 1D}

As a first example, consider the 1D parabolic PDE:
\begin{equation}\label{ex1}
u_t = D\,u_{xx} + (t+1)\sin(x),
\qquad
u(x,0)=\sin(x).
\end{equation} 

\par We let $D = 1$ and the true solution is $u(t,x) = (t+e^{-t})\sin(x)$. We consider a RFNN with the transformed Gaussian random features, using both random and Sobol low-discrepancy sampling, as well as the importance sampling features, for comparison. For all three tests, we use $M_0 = 32, M_1 = 64$, a time horizon $T = 1.0$ and a grid of collocation points in $(0,T] \times [-\pi,\pi]$, of size $N_{PDE} = 3,000$ and of size $N_{IC} = 1,000$ for the initial condition. For training the initial condition weighting is set as $\lambda_{IC} = \sqrt{3}$ enforcing equal weighting to the initial condition and PDE residual, and we set $\lambda_{\mathrm{ridge}} =0$, solving the minimization problem by the Moore-Penrose inverse. Fig. \ref{fig:linear-PINN} shows the results of the learned model on a new test grid of size $100\times 100$ in $[0,T]\times [-\pi/2, \pi/2]$, in order to eliminate the effects of the errors due to the free-boundary. (For brevity, we only plot the solution using the Gaussian RFNN, as the contours are identical.) 

\begin{figure}[h!]
  \centering
  \begin{subfigure}[t]{0.48\textwidth}
    \centering
    \includegraphics[width=0.75\linewidth]{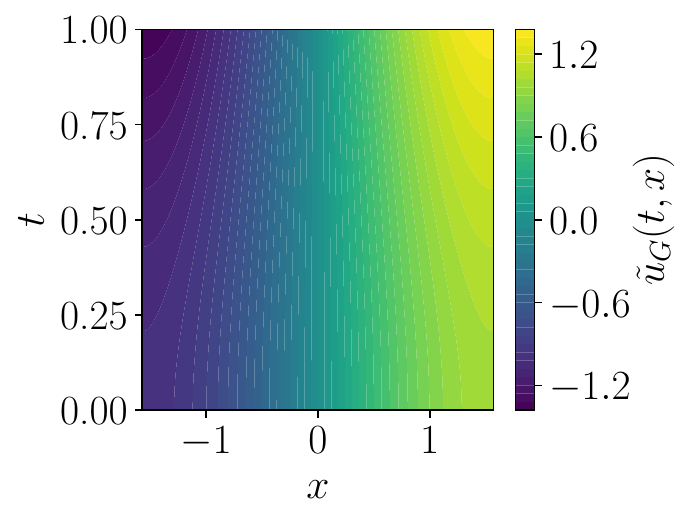}
    \caption{Approximation by the RFNN with transformed Gaussian features, $\tilde{u}_G(t,x)$.}
  \end{subfigure}
  \hfill
  \begin{subfigure}[t]{0.48\textwidth}
    \centering
    \includegraphics[width=0.75\linewidth]{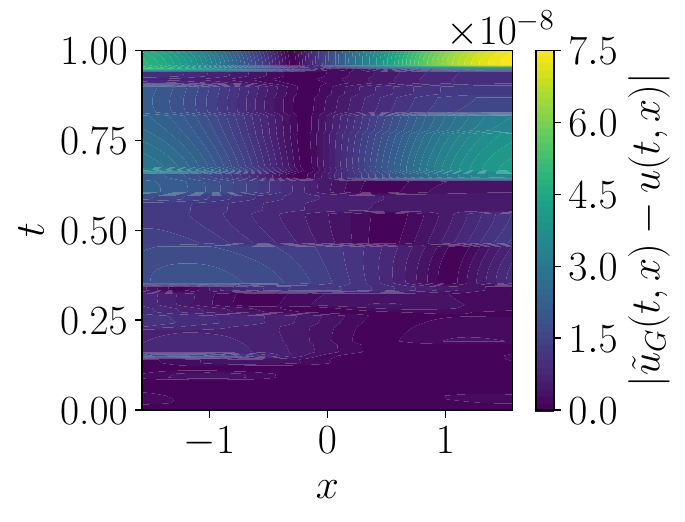}
    \caption{Absolute error of the RFNN with Gaussian features, $\tilde{u}_G(t,x).$}
  \end{subfigure} \\

  \begin{subfigure}[t]{0.48\textwidth}
    \centering
    \includegraphics[width=0.75\linewidth]{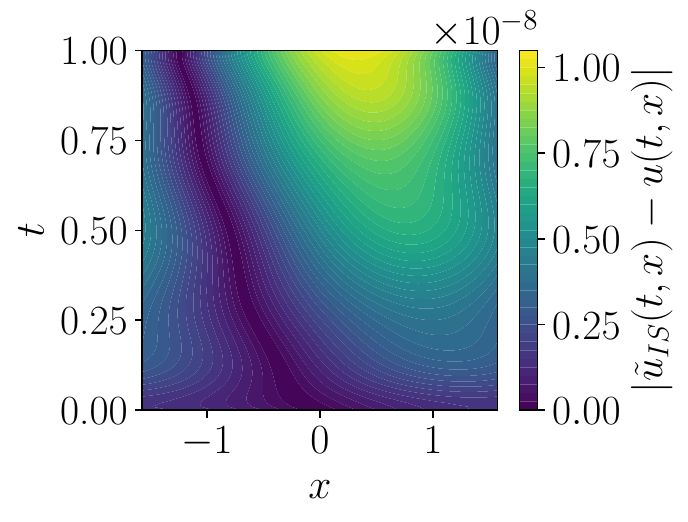}
    \caption{Absolute error of the RFNN with importance sampling features, $\tilde{u}_{{IS}}(t,x)$.}
  \end{subfigure}
  \hfill
  \begin{subfigure}[t]{0.48\textwidth}
    \centering
    \includegraphics[width=0.75\linewidth]{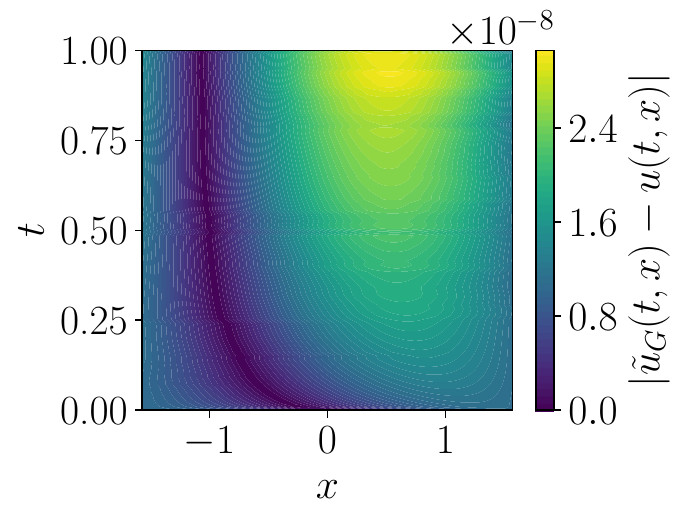}
    \caption{Absolute error of the RFNN with Gaussian features and Sobol sampling.}
  \end{subfigure}

  \caption{HEATNETs results for \eqref{ex1} on test grid. The RFNNs were constructed using $M_0=32$ and $M_1 =64$ features for the initial condition and the forcing term features, respectively and we consider 4,000 training points ($N_{PDE} = 3,000$ and $N_{IC} = 1,000$). }
  \label{fig:linear-PINN}
\end{figure}

\par Notice in the above example how the number of features in the HEATNET is many orders of magnitude less that the samples that would be required to calculate the Monte Carlo estimation of the solutions directly. This is the power of the PIML framework we mentioned in section \ref{sec:HEATNETs}: our custom neural network is trained so that the model learns the correct weights needed to make this representation a universal approximator.

\subsection{HEATNETs in high dimensions}

As seen in the theoretical derivations, standard difficulties with the naive Monte Carlo estimate in high dimensions motivate the use of importance sampling random features as the basis of the custom RFNN, which can help overcome the sampling difficulties. We proceed with this construction and the training of the HEATNETs as detailed in section \ref{section-RFNN}. 
\par As a starting point, we begin with the example below, up to dimension $d=10$, (due to its numerical properties, as will be explained). In the following examples, we provide a thorough analysis, using different sampling methods, training data and time horizon experiments going up to 2000 dimensions.

\subsubsection{Benchmark Example 1: PDE up to $d=10$}
We consider first the simple diffusion PDE (with a zero forcing term). This example is solved in \cite{von2004numerical}, up to dimension $d=5$ and for $T = 0.05$:
\begin{equation}\label{example-sch}
\begin{aligned}
    u_t(t,x) -D\Delta u(t,x) =0, \,\,\text{with}\\
    u(t,x) = \exp{\left(-d\pi^2 t \right)} \prod_{i=1}^d \sin(\pi x_i), \,\,
    u_0(x) = \prod_{i=1}^d \sin(\pi x_i).
\end{aligned}
\end{equation} 

\par The solution tends to zero exponentially in $d$.  For the construction of the HEATNET, $u_{IS}(t,x)$, notice that we only have the samples arising from the initial condition features. Hence, we take many samples corresponding to $\phi^{(0)}$, setting $M = M_0 = 15,000$. We train with $N_{PDE} = 20,000, N_{IC} = 4,000$ points and use $\lambda_{IC} = \sqrt{5}$ for the weighting of the initial condition and $\lambda_{\mathrm{ridge}} = 1.0$E$-06$ for the least squares problem. The relative $L_1, L_2$ and $L_{\infty}$ error metrics for the same time horizon and for dimensions up to $d=10$ are gathered in Table \ref{tab:sch_metrics_T005}.  (Note that these are the error metrics we consider throughout the remaining examples).

\begin{table}[ht]
\centering
\begingroup
\small
\setlength{\tabcolsep}{6pt}%
\renewcommand{\arraystretch}{1.05}%
\begin{tabular}{@{}lccc@{}}
\toprule
& \multicolumn{1}{c}{\(d=2\)} & \multicolumn{1}{c}{\(d=5\)} & \multicolumn{1}{c}{\(d=10\)} \\
\midrule
\(\mathrm{Rel.}\,L_1\)      & \(9.09\mathrm{E}{-08}\) & \(1.66\mathrm{E}{-07}\) & \(1.58\mathrm{E}{-05}\) \\
\(\mathrm{Rel.}\,L_2\)      & \(9.70\mathrm{E}{-08}\) & \(1.49\mathrm{E}{-07}\) & \(4.71\mathrm{E}{-06}\) \\
\(\mathrm{Rel.}\,L_{\infty}\) & \(1.33\mathrm{E}{-07}\) & \(1.61\mathrm{E}{-07}\) & \(1.25\mathrm{E}{-06}\) \\
Time (min)                  & \(1.52\) & \(1.97\) & \(2.77\) \\
\bottomrule
\end{tabular}
\endgroup
\caption{Error metrics and model building and training times for the Example \eqref{example-sch}, with $T =0.05$.}
\label{tab:sch_metrics_T005}
\end{table}
As shown, the HEATNET is able to match the errors of order E-07, as presented in \cite{von2004numerical}, and solve for higher dimensions $d =10$ while maintaining highly accurate predictions of the order E-05 - E-06 (in \cite{von2004numerical} the authors consider up to $d$=5). 
Finally, we also note that in \cite{gaby2024neural} the authors also consider a similar PDE and initial conditions, achieving relative errors of the order E-05 for dimensions up to $d=5$. 

\subsubsection{Benchmark Example 2: PDE up to $d = 2,000$}
We now consider the PDE for $x \in \mathbb{R}^d$:

\begin{equation}\label{example-high-d}
\begin{aligned}
    u_t(t,x) = D\Delta u(t,x) + g'(t)S_k(x) + Dg(t)k^2\,S_k(x), \,\,\ \text{with}\\
S_k(x)=\dfrac{1}{\sqrt{C}}\sum_{i=1}^d c_i \sin(k x_i), \,\,\,
g(t) = t+e^{-t}, \,\, \text{and } C=\sum_{i=1}^d c_i^2.
\end{aligned}
\end{equation}
The initial condition is $u(0,x) = S_k(x)$ and the true solution is given by $u(t,x) = g(t)S_k(x)$. For our illustrations, we will consider $k=2$ and $c_i = 1$, for all $i$. 
\par To demonstrate the full capacity and use-cases of the HEATNETs, we considered four sets of experiments. In the first, we use $M_0 = 500$ and $M_1 =1000$ samples corresponding to the homogeneous and the forcing integrals, respectively. We consider the domain $[0,T]\times [-\pi, \pi]^d$, $N_{PDE} = 10,000$ and $N_{IC} = 2,000$ training points for the PDE residual and initial condition respectively, with a $\lambda_{IC} = \sqrt{5}$ and a small ridge constant $\lambda_{\mathrm{ridge}} = 1.0$E$-06$. We run training instances for each of the combinations of dimensions and time horizons $(d,T)$ with various selections from $d=5$ to $d = 1,000$ and $T = \{0.25, 0.50, 0.75, 1.00\}$. The models are tested on a randomly sampled test grid $\{t_i,x_i\}_{i=1}^{N_{test}} \in [0,T]\times [-\pi/2, \pi/2]^d$, of size $N_{{test}} = 6,000$, by comparing the model output $\tilde{u}_{IS}(t_i,x_i)$ to the true solution $u(t_i,x_i)$. Secondly, to show that these results can be further improved when moving on to even higher dimensions by increasing the number of features, we plot the same graphs using $M_0=3,000$ and $M_1 = 5,000$. The error metrics for both these sets of experiments are gathered and shown in Fig. \ref{fig:high-dim-PINN-graphs} (note that the confidence bands across runs were omitted in these graphs as they were not visible). 

\par Finally, in Table \ref{tab:error_metrics_by_d_M}, we also provide the accuracy that can be achieved by increasing the number of features in the HEATNET and the corresponding training samples (we consider $N_{PDE} = 15,000$, $N_{IC} = 3,000$). This allows us to consider even higher dimensions, reaching $d = 2,000$. For clarity, we compare the results when using $M = 8,000$ and $M = 10,000$ features. As shown, increasing the features and training samples (as well as the corresponding computational requirements) results in highly accurate predictions across dimensions. 

\par As a final experiment, we also consider an under-determined problem, with $M_0 = 3,000, M_1 = 5,000$ and $N_{PDE} = 500, N_{IC} = 100$, for a single time horizon $T = 0.5$. To take advantage of the benefits of low-discrepancy sequences, we construct the HEATNET using Sobol sampling. The 10/90 percentile band, $[P_{10}, P_{90}]$, interquartile range and the median line, $P_{50}$, across 20 independent training instances are displayed in Fig. \ref{fig:high-dim-PINN-graphs-under}, showing that even with less training data the expressive power of the Gaussian features can produce accurate models, able to generalize and learn the underlying solution structure from sparse information. (We note that we do not include confidence intervals for the previous experiments as the high number of samples results in solutions with extremely low variance.)

\begin{figure}[h!]
\centering

\begin{subfigure}{0.32\textwidth}\centering
  \includegraphics[width=\linewidth]{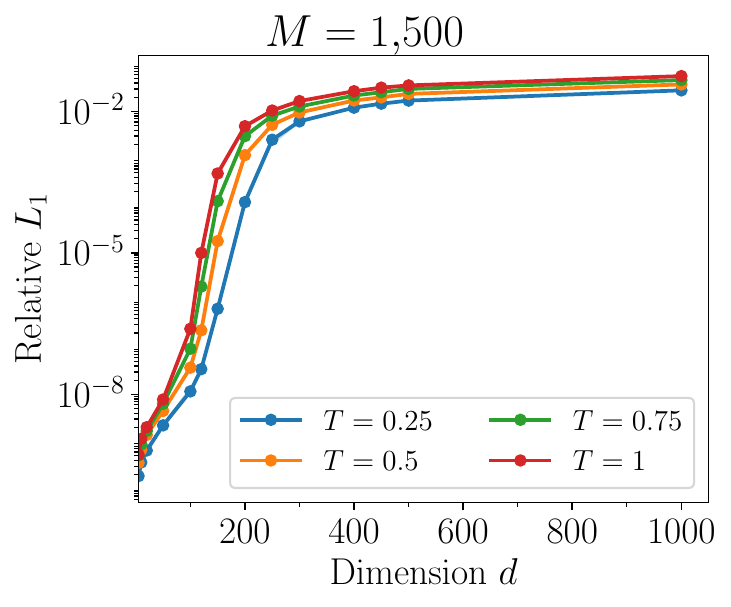}
  \caption{Rel.\ $L_1 = \frac{\|\tilde u_{\mathrm{IS}}-u\|_1}{\|u\|_1}$}
\end{subfigure}
\begin{subfigure}{0.32\textwidth}\centering
  \includegraphics[width=\linewidth]{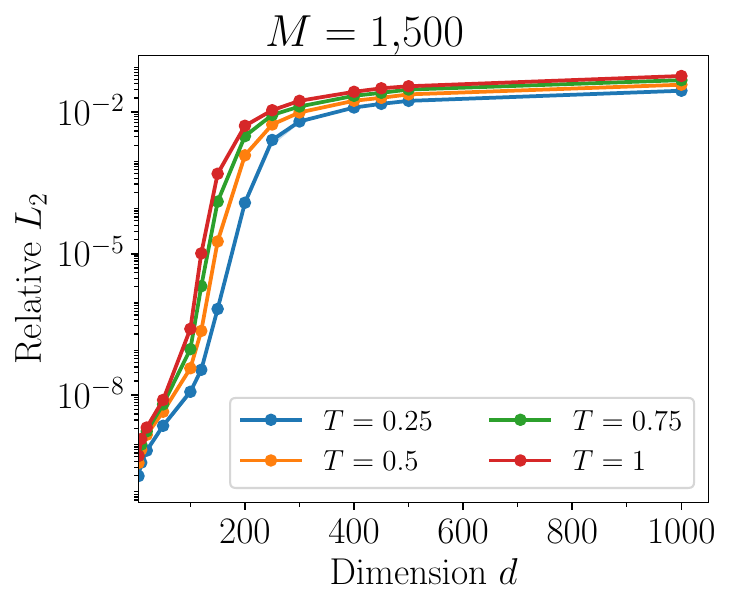}
  \caption{Rel.\ $L_2 = \frac{\|\tilde u_{\mathrm{IS}}-u\|_2}{\|u\|_2}$}
\end{subfigure}
\begin{subfigure}{0.32\textwidth}\centering
  \includegraphics[width=\linewidth]{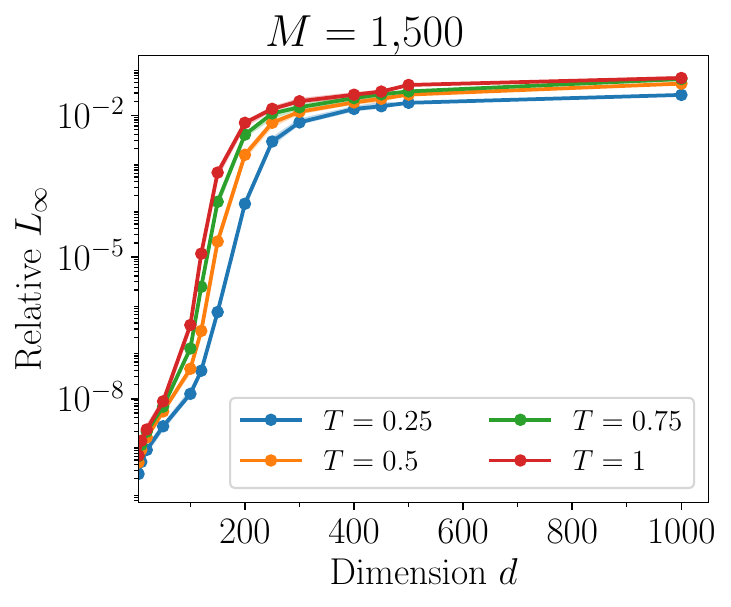}
  \caption{Rel.\ $L_\infty = \frac{\|\tilde u_{\mathrm{IS}}-u\|_\infty}{\|u\|_\infty}$}
\end{subfigure}\hfill
\includegraphics[width=0.32\textwidth]{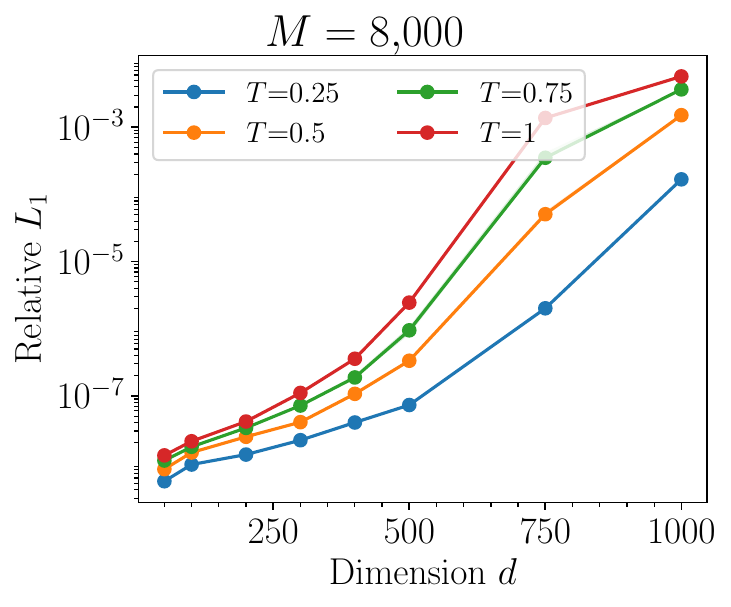}
\includegraphics[width=0.32\textwidth]{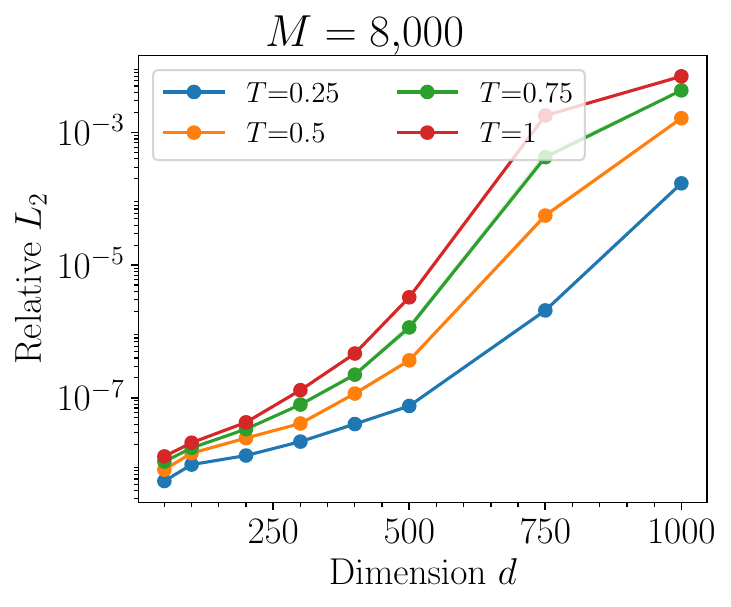}  \includegraphics[width=0.32\textwidth]{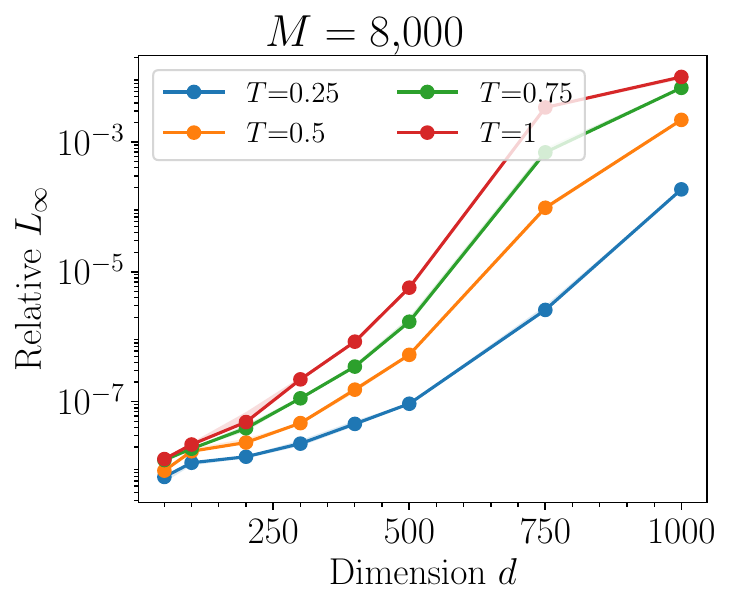}  

\caption{First row: Error metrics for the HEATNETs solution of \eqref{example-high-d} for various time horizons, using $N_{PDE} =10,000, N_{IC} =2,048$ training points. First row: HEATNET with $M=1,500$ neurons (features) ($M_0 = 500, M_1 = 1,000$). Second row: HEATNET with $M = 8,000$ neurons (features) ($M_0 = 3,000, M_1 = 5,000$). }
\label{fig:high-dim-PINN-graphs}
\end{figure}

\begin{table}[ht]
\centering
\begingroup
\small
\setlength{\tabcolsep}{3.5pt}%
\renewcommand{\arraystretch}{1.0}%
\begin{tabular}{@{}l*{4}{cc}@{}}
\toprule
& \multicolumn{2}{c}{\(d=100\)}
& \multicolumn{2}{c}{\(d=500\)}
& \multicolumn{2}{c}{\(d=1000\)}
& \multicolumn{2}{c}{\(d=2000\)} \\
\cmidrule(lr){2-3}\cmidrule(lr){4-5}\cmidrule(lr){6-7}\cmidrule(lr){8-9}
Metric
& \(M=8,000\) & \(M=10,000\)
& \(M=8,000\) & \(M=10,000\)
& \(M=8,000\) & \(M=10,000\)
& \(M=8,000\) & \(M=10,000\) \\
\midrule
\(\mathrm{Rel.}\,L_1\)   & 9.35E-08 & 9.41E$-08$ & 1.61E-07 & 1.39E-07 & 8.73E-04 & 8.39E-05 & 8.94E-03 & 6.21E-03 \\
\(\mathrm{Rel.}\,L_2\)   & 9.94E-08 & 9.99E$-08$ & 1.65E-07 & 1.42E-07 & 9.38E-04 & 9.20E-05 & 9.06E-03 & 6.43E-03 \\
\(\mathrm{Rel.}\,L_{\infty}\)  & 1.29E-07  & 1.58E$-07$ & 1.55E-07 & 2.04E-07 & 1.35E-03 & 1.49E-04 & 1.06E-02 & 7.29E-03 \\
 Time (min) & 5.16 & 6.49 & 26.03 & 38.23 & 113.18 & 115.13 & 348.89 & 437.18 \\
\bottomrule
\end{tabular}
\endgroup
\caption{Relative error metrics and model building and training time for various dimensions $d$ and selection of HEATNET features $M$, with constant training samples $N_{PDE}= 15,000$ and $N_{IC} =3,000$ and time horizon $T = 0.5$.}\label{tab:error_metrics_by_d_M}
\end{table}

\begin{figure}[h!]
\centering

\includegraphics[width=0.32\textwidth]{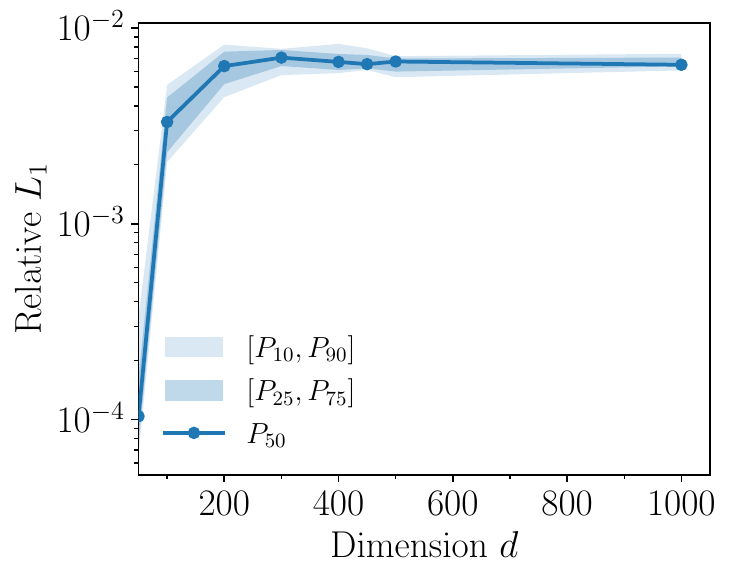}\hfill
\includegraphics[width=0.32\textwidth]{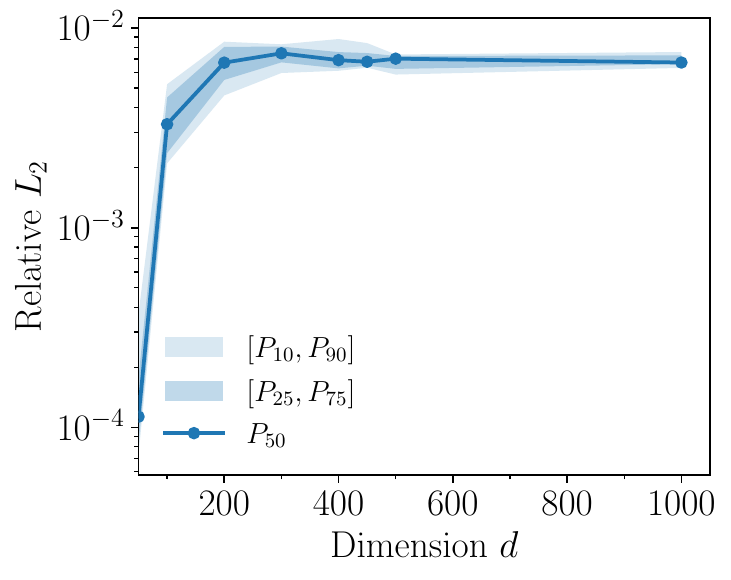}  \includegraphics[width=0.32\textwidth]{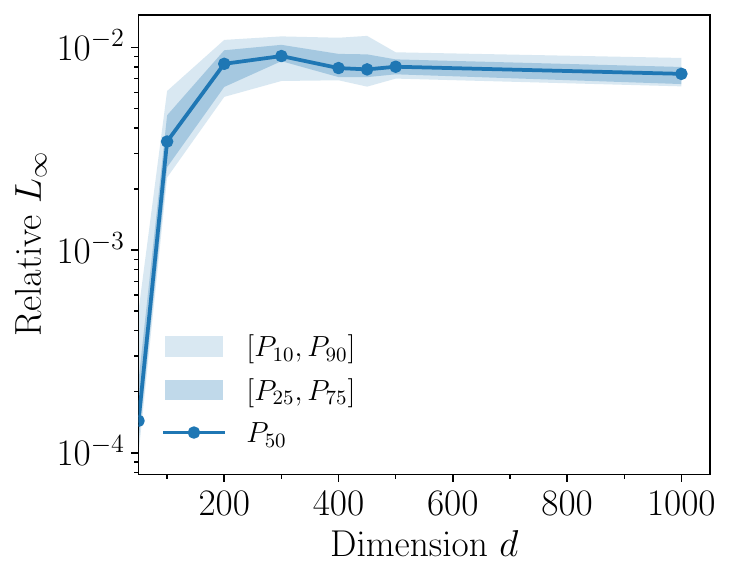}  

\caption{Median error metrics, $P_{50}$, 10/90 percentile bands, $[P_{10}, P_{90}]$, and interquartile range, $[P_{25}, P_{75}]$, for the HEATNETs solution of \eqref{example-high-d}, for a time horizon $T =0.5$, using 600 training points ($N_{PDE} = 500, N_{IC} =100$) and $M=8,000$ neurons (features) in the hidden layer ($M_0 = 3,000, M_1 = 5,000$) with Sobol sampling. (Error metrics defined as in Fig. \ref{fig:high-dim-PINN-graphs}.)  }
\label{fig:high-dim-PINN-graphs-under}
\end{figure}

The above experiments provide a holistic view of the performance of the proposed HEATNETs approach. From Fig. \ref{fig:high-dim-PINN-graphs} and Table \ref{tab:error_metrics_by_d_M} we see that HEATNETs achieve exceptionally high accuracy, with relative $L_2$ errors as low as E-08 - E-07 for dimensions up to $d = 500$, and E-05 - E-04 when reaching $d = 1,000$ by increasing training data and features. The tests in the underdetermined regime show that HEATNETs exhibit excellent robustness; the relative $L_2$ error begins at approximately E-05 for $d=50$, rising to approximately E-03 at $d=100$ and remaining stable up to $d=1,000$.  Such a ``plateau'' has been also observed in other relevant works (see detailed comments and comparisons below). 

\subsection{Benchmark Example 3: PDE up to $d = 1,000$}
Here, for our illustrations, we adjusted \eqref{example-high-d} to obtain the PDE:
\begin{equation}\label{example-non-sep}
\begin{aligned}
u_t(t,x) = D \Delta u(t,x) + F(t,x), \,\, \text{with} \\
q(t) = \alpha t^2 \exp{(-t/3)} \,\,\, \text{and }\,\,
E(t,x) = \exp{\left(-\beta t \frac{\| x\|^2}{d}\right)},
\end{aligned}
\end{equation}
where the forcing term is constructed as below:
\begin{flalign}
        F(t,x) = \big(g'(t)+ & D k^2 g(t)\big)S_k(x) + \Big(q'(t)-\beta\,\tfrac{\|x\|^2}{d}q(t)\Big)E(t,x)S_m(x) \notag \\ & - Dq(t)\Big[E(t,x)\,\big(-m^2 S_m(x)\big)+S_m(x)\Delta E(t,x)+2\,\nabla E(t,x)\!\cdot\!\nabla S_m(x)\Big].
\end{flalign}
The closed form solution is $u(t,x) =  g(t)\,S_k(x) + q(t) E(t,x)S_m(x)$, where we used $(k,m) = (2,3)$.

\par For the solution of the PDE \eqref{example-high-d}, we used $M = 10,000$ features ($M_0 = 4,000, M_1 = 6,000$), $N_{PDE} = 15,000$ and $N_{IC} = 3,000$ samples for model training. As above, we use $\lambda_{IC} = \sqrt{5}$ and $\lambda_{\mathrm{ridge}} = 1.0$E$-06$ to solve the least squares problem. We consider time horizons $T = 0.5,1.0$. The results are gathered in Fig. \ref{fig:high-dim-non-sep}. As seen, the HEATNET is again able to achieve very accurate results, never exceeding relative errors of the order $5.0$E$-03$. We note that the computational time is of the same order as the results shown in Table \ref{tab:error_metrics_by_d_M}, when using $M = 10,000$ features. (We note that, as above, we can extend consider even higher dimensions by increasing the features and/or computational time).   

\begin{figure}[h!]
\centering
\includegraphics[width=0.32\textwidth]{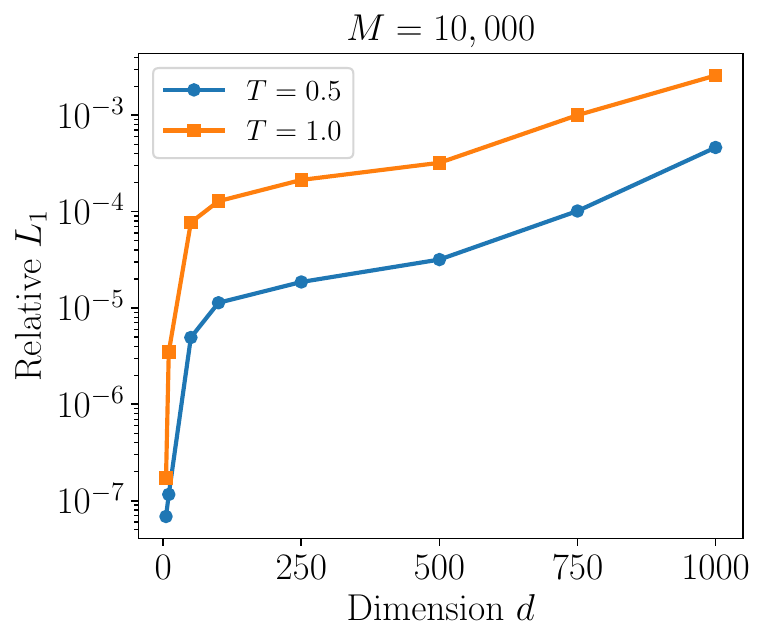}
\includegraphics[width=0.32\textwidth]{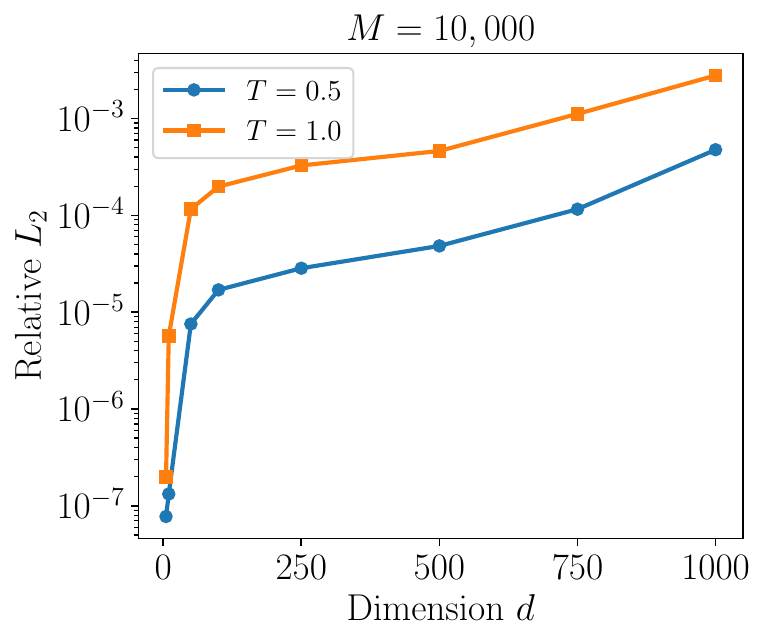}  \includegraphics[width=0.32\textwidth]{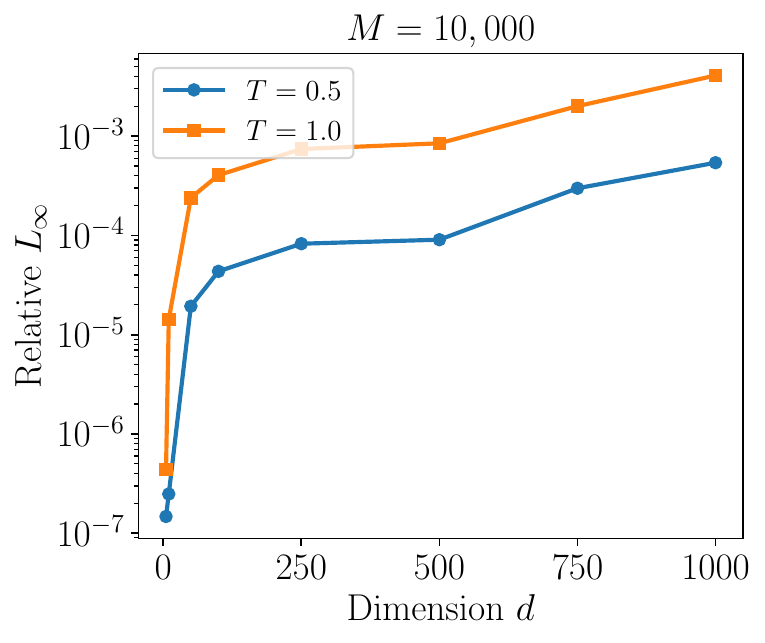}  
\caption{HEATNET results for PDE \eqref{fig:high-dim-non-sep}, using $M = 10,000$ features and $N_{PDE} = 15,000, N_{IC} = 3,000$.}
\label{fig:high-dim-non-sep}
\end{figure}

The examples above have shown that, in relatively lower dimensions until $d=100$, our approach achieves higher accuracy compared to other approaches \cite{gaby2024neural, cho2023separable, xu2025deep,wang2024extreme} for the solution of similar problems. 

When moving to higher dimensions $d>100$, our approach still improves on the current literature; e.g., in \cite{menon2025anant}, the linear Poisson equation in $d=300$ is solved achieving a relative $L_2$ error of $6.00$E-02. In comparison, our results show relative $L_2$ of the order E-07, for $d=300$ for the diffusion PDE \eqref{example-high-d} (as seen in Fig. \ref{fig:high-dim-PINN-graphs}) and E-05 for \eqref{example-non-sep} (Fig. \ref{fig:high-dim-non-sep}).  
HEATNETs achieve relative $L_2$ errors of the order E-08 for $d = 100$ and E-05 - E-04 for $d=1,000$ when solving PDE \eqref{example-high-d} and E-05 and E-04 - E-03, respectively, when solving PDE \eqref{example-non-sep}. 
As shown in Fig. \ref{fig:high-dim-PINN-graphs} and Fig. \ref{fig:high-dim-PINN-graphs-under} the error metrics exhibit a plateau which, as mentioned above, is in line with results reported in other studies (see e.g., in \cite{hu2024tackling}, where such a plateau is also observed for dimensions ranging from $d=1,000$ to $d = 100,000$). 

\section{Discussion/ Conclusions}

In this work, we introduced HEATNETs: tailor-made ``functional analysis-informed'' random feature neural networks for the solution of parabolic PDEs in high dimensions, based on the mild representation of the solution. We prove that HEATNETs provide universal approximators for parabolic PDE solutions in arbitrary dimensions and attain a sample-complexity-type convergence of the order of $\mathcal{O}(N^{-1/2})$. Furthermore, we provide a detailed mathematical analysis regarding the numerical implementation of the model and its features, by handling the singularities that arise in the Gaussian kernels and using importance sampling to move to high dimensions.

\par To provide a thorough overview of the construction, training and applicability of HEATNETs, we have used various benchmark linear problems in dimensions from 1 to 2,000 for which analytical solutions are available. As shown, the scheme achieves a remarkable numerical approximation accuracy of the order of $\sim$E-08/E-07 for up to 200/500 dimensions, and of the order of $\sim$E-03/E-04 for dimensions from 1,000 to 2,000, thus achieving higher accuracy compared to other approaches for similar problems. 

Importantly, HEATNETs constitute an explainable machine learning approach with relatively low model complexity/ dimensionality. Indeed, in our examples, we show that with less than 100 total features we are able to achieve extremely high accuracy for 1D PDEs, and considering only 1,500 nodes (features) suffices for relative $L_1, L_2$ and $L_\infty$ errors of the order of E-08 - E-07 for PDE \eqref{example-high-d}, for dimensions up until $d = 100$. Finally, with nodes (features) from 8,000 - 10,000 we are able to achieve errors of the order E-04 - E-03 (for the studied metrics) for dimensions $d = 1,000 - 2,000$ across all studied examples. These results can be (loosely/tentatively) compared to prior results of other approaches for the solution of linear PDEs. Indicatively, in \cite{xu2025deep} a neural network with 2 hidden layers of 64 neurons is used to solve the 1D Laplace equation, amounting to approximately 4,000 trainable parameters, and this increased significantly when applying the method to the Black-Scholes PDE in $d=100$, where 4 hidden layers of 512 neurons are used (approximately 840,000 parameters).
In \cite{hu2024tackling} an MLP with 4 hidden layers of 128 neurons is used for the linear Poisson PDE (e.g., for $d=5$ this leads to over 50,000 trainable parameters). In \cite{wang2024extreme} a single layer extreme learning machine was used with 2,000 nodes for the linear Poisson PDE up to $ d= 15$ and 3000 nodes were used for the linearized  Korteweg–De Vries equation up to $ d =10$. Other approaches like \cite{cai2025deep,gaby2024neural,cho2023separable} combine standard fully-connected neural networks with additional underlying structures that increase the total number of parameters. Compared (loosely) to prior results, HEATNETs therefore provide a more interpretable and explainable alternative. 

Regarding the computational requirements, other methods such as \cite{hu2024tackling,cho2023separable, cai2025deep} employ similar resources to this work, i.e., A100 machines or T4 GPUs (\cite{menon2025anant}), (\cite{xu2025deep} use an A40 GPU). 
For example, the HEATNET with $M = 1,500$ for e.g., $d=300$ requires around 900 seconds for building and training, for all problems considered.
Illustratively, for similar (linear) problems of the same dimension \cite{menon2025anant} report 127 minutes. In \cite{wang2024extreme} training requires 14.6 seconds for $d= 9$; the HEATNET with 2,000 features for PDE \eqref{example-high-d} and $d = 9$ similarly requires 10.9 seconds for training (and 9.3 seconds for building the $A,b$). Overall, HEATNETs provide a flexible framework to consider various combinations of the model hyperparameters and computational requirements needed to achieve specific levels of accuracy. (For relatively low dimensional cases, one can construct and train the model on a standard CPU).
While direct one-to-one comparisons are not always straightforward due to differences in PDEs, and implementation details, the experimental evidence nonetheless demonstrates clear advantages of our approach.  
A detailed comparison is beyond the scope of this paper and is left for future work.

The strong performance across experiments suggests that HEATNETs are able to avoid overfitting and generalize well. This is a critical feature for successfully mitigating the curse of dimensionality and is particularly evident from the under-determined setting (see Fig. \ref{fig:high-dim-PINN-graphs-under}), where we consider only $600$ collocation points for the PINN training approach and are still able to consistently reach errors of the order E-03 up to $d=1,000$ dimensions.  We attribute the performance of the HEATNETs to the tailor-made nature of the model, since the basis of the RFNN is constructed to fit the underlying mathematical theory of the mild solution to the parabolic PDE. \par 

To conclude, our primary scope is to introduce the method and its theoretical foundations; although it can be used to solve problems in arbitrarily higher dimensions (beyond the 2,000 demonstrated here), computational cost and memory requirements grow rapidly with the number of collocation points and dimensionality, so we leave addressing scalability for such very high-dimensional systems to future work. Extensions to more challenging problems, including non-linear PDEs,  will be also addressed in subsequent work. This extension is straightforward, using appropriate fixed-point iterations, that can be emulated using the concept of Fredholm Neural Networks, which we have recently introduced in \cite{georgiou2025fredholm1}.

\section*{Acknowledgments}

K.G. acknowledges support from the PNRR MUR Italy, project PE0000013-Future Artificial Intelligence Research-FAIR. 
C.S. acknowledges partial support from the PNRR MUR Italy, projects PE0000013-Future Artificial Intelligence Research-FAIR \& CN0000013 CN HPC - National Centre for HPC, Big Data and Quantum Computing. Also from the Istituto di Scienze e Tecnologie per l'Energia e la Mobilità Sostenibili (STEMS)-CNR. A.N.Y. acknowledges the use of resources from the Stochastic Modelling and 
Applications Laboratory, AUEB.  

\bibliographystyle{plain}
\bibliography{mybibfile}

\end{document}